\theoremstyle{definition}
\newtheorem{defin}{D\'efinition}[section]
\theoremstyle{plain}
\newtheorem{theo}[defin]{Th\'eor\`eme}
\newtheorem{prop}[defin]{Proposition}
\newtheorem{lem}[defin]{Lemme}
\theoremstyle{definition}
\newtheorem{rem}[defin]{Remarque}
\newtheorem*{abstract2}{Abstract}
\def\C{{\mathbf C}}
\def\N{{\mathbf N}} 
\def\R{{\mathbf R}} 
\def\Q{{\mathbf Q}}
\def\Z{{\mathbf Z}}
\def\F{{\mathbf F}}
\def \mA { \mathcal{A}}
\def \mC {\mathcal{C}}
\def\Ker{ \operatorname{Ker} }
\def\Im{ \operatorname{Im} }
\def \Spec { \operatorname{Spec} }
\def \End {\operatorname{End} }
\def \Hom {\operatorname{Hom} }
\def \Tr {\operatorname{Tr} }
\def \Card {\operatorname{Card} }
\author{Séverin Philip}
\begin{document}
\title{Variétés abéliennes CM et grosse monodromie finie sauvage}

\maketitle

\begin{abstract2}
~In this paper we study the wild part of the finite monodromy groups of abelian varieties over number fields. We solve Grunwald problems for groups of the form $\Z/p\Z\wr \mathfrak{S}_n$ over number fields to build CM abelian varieties with maximal wild finite monodromy in the odd prime case. For the even prime case we prove a new bound on the 2-part of the order of the finite monodromy group for CM abelian varieties and build varieties that reach it.  

\end{abstract2}

\section{Introduction}

\subsection{Contexte et motivation} \label{chap2cont}

Les groupes de monodromie finie d'une variété abélienne sont d'abord introduits par Serre dans \cite{propgal} dans le cas des courbes elliptiques. Pour une variété abélienne $A$ de dimension quelconque sur un corps local à corps résiduel algébriquement clos, Grothendieck montre l'existence d'une plus petite extension, qui est galoisienne, sur laquelle $A$ atteint réduction semi-stable dans \cite{sga} exposé IX. Le groupe de monodromie finie de $A$ est alors défini comme le groupe de Galois de cette extension. Dans le cas où $A$ est une variété abélienne sur un corps de nombres $K,$ on obtient par cette construction pour chaque place non archimédienne $v$ de $K$ un groupe de monodromie finie en $v$ de $A,$ que l'on note $\Phi_{A,v},$ qui représente l'obstruction locale à la semi-stabilité de $A$. Ces groupes sont ensuite étudiés par Silverberg et Zarhin dans \cite{Sb1998} et \cite{Sb2004}. Dans \cite{Sb1998}, il est montré que le cardinal d'un groupe de monodromie finie $\Phi_{A,v}$ divise le ppcm des cardinaux des sous-groupes finis de $\mathrm{GL}_{2g}(\Q)$ où $g$ est la dimension de $A$. Ce ppcm, que l'on note $M(2g)$, est calculé par Minkowski en 1887 et est appelé borne de Minkowski. Pour tout nombre premier $p$ et tout entier naturel non nul $n$ on pose
$$r(n,p)= \sum\limits_{i\geq 0} \big \lfloor \frac{n}{p^i (p-1)} \big \rfloor.$$
Alors la borne de Minkowski est donnée par
$$M(n)=\prod\limits_p p^{r(n,p)}.$$

Cette borne de divisibilité est atteinte en dimension $1$ car le groupe $\mathrm{SL}_2(\F_3)$, qui est le groupe des automorphismes de la courbe supersingulière sur $\overline{\F_2}$, est le groupe de monodromie finie en $2$ de la courbe elliptique sur $\Q$
$$E\colon y^2=x^3-2x^2-x$$
(voir le paragraphe 5.9.1 \cite{propgal}). On a $\Card \mathrm{SL}_2(\F_3)= M(2)=24$. La liste des groupes de monodromie finie pour les surfaces abéliennes fait l'objet de \cite{Sb2004}. Plus précisément ils établissent une liste de groupes finis telle que toute surface abélienne sur un corps de nombres ou local prend ses groupes de monodromie finie dans la liste. Ensuite ils montrent que chacun des groupes de la liste est le groupe de monodromie finie d'une surface abélienne sur un corps local d'égale caractéristique. On peut vérifier que la borne de Minkowski n'est atteinte pour aucun des groupes de la liste, en revanche c'est le plus petit commun multiple des cardinaux des groupes listés. En particulier, on trouve pour chaque premier $p$ une surface abélienne $A_p$ sur un corps local $K_p$ (d'égale caractéristique $p$) telle que

$$v_p(\Card \Phi_{A_p})= v_p(M(4))=r(4,p).$$

On se propose dans cet article de montrer une assertion analogue pour les corps de nombres et en dimension arbitraire. On va donc s'intéresser à la construction de variétés abéliennes de toutes dimensions sur des corps de nombres dont la $p$-partie des groupes de monodromie finie est maximale. Cette étude nous amène à montrer une nouvelle borne pour la $2$-partie du cardinal d'un groupe de monodromie finie d'une variété abélienne potentiellement CM.

\subsection{\'Enoncé des résultats}
\begin{theo}\label{chap2main}
Soient $g$ un entier naturel non nul et $K$ un corps de nombres non ramifié en $2$. On note $p_1,\dots,p_n$ les diviseurs premiers impairs de $M(2g)$. Alors il existe une extension finie $L$ de $K$ telle que pour chaque $i\in \{1,\dots,n\}$ il existe une variété abélienne $A_i$ de dimension $g$ principalement polarisée sur $L$ et une place $v_i$ de $L$ avec
$$\Card \Phi_{A_i,v_i}= p_i^{r(2g,p_i)}$$
et il existe une variété abélienne principalement polarisée $A$ de dimension $g$ sur $L$ et une place $v$ de $L$ telles que 
$$\Card \Phi_{A,v}=2^{r(2g,2)+1-g}.$$
\end{theo}
La $p$-partie des groupes de monodromie finie dont le théorème assure l'existence est donc maximale, sauf lorsque $p$ est pair. La construction des variétés abéliennes $A_i$ repose sur la théorie de la multiplication complexe et le manque, lorsque $p=2,$ s'explique par le théorème suivant, résultat principal de la partie 5 qui avec le théorème \ref{Chap2theoprincipaldem} donne  l'égalité 
$$\Card \Phi_{A,v}=2^{r(2g,2)+1-g}$$
 dans l'énoncé précédent.
 
\begin{theo} \label{chap2majcmgen}
Soit $A$ une variété de dimension $g$ sur un corps de nombres $K$ telle que $A_{\overline{K}}$ est CM. Alors on a 
$$v_2(\Card \Phi_{A,v}) \leq r(2g,2)+1-g$$
pour toute place ultramétrique $v$ de $K$. De plus, l'égalité ne peut intervenir que lorsque une composante isotypique de $A_{\overline{K}}$ est isogène à une puissance de la courbe elliptique $y^2=x^3-x$. 
\end{theo}

Lorsque $A$ est isotypique et que $A$ a multiplication complexe par $Z,$ il est connu depuis \cite{serretate} que $\Phi_{A,v}$ est un sous-groupe du groupe des racines de l'unité de $Z$. Si l'on ne suppose plus que $A$ est CM mais seulement que $A_{\overline{K}}$ l'est alors le degré $[K_A:K]$ intervient dans la majoration où $K_A$ est le corps de définition des endomorphismes de $A_{\overline{K}}$. Ce degré est étudié dans \cite{r} et \cite{gk}; ces derniers obtiennent
$$v_2([K_A:K]) \leq r(2g,2)-g-1.$$
On obtient alors la borne en combinant les deux approches de façon adaptée à notre contexte.

La construction des variétés abéliennes de l'énoncé du théorème \ref{chap2main} se fait par une adaptation aux corps de nombres des techniques de \cite{Sb2004} qui sont utilisées dans le cas de corps locaux d'égales caractéristiques $p$. Précisément on construit ces variétés abéliennes comme formes tordues de variétés abéliennes avec multiplication complexe (CM).  Pour cela on effectue deux généralisations du théorème 4.3 de torsion de \cite{Sb2004}, l'une aux corps de nombres et l'autre avec des hypothèses réduites, qui permettent de construire des variétés abéliennes avec des groupes de monodromie finie prescrits. Pour déduire de ces théorèmes le résultat principal on a besoin d'une part de l'existence de variétés abéliennes avec des gros groupes d'automorphismes et d'autre part de l'existence d'extensions galoisiennes de corps de nombres avec des gros groupes d'inertie en des places choisies.

En partie 2 on montre l'existence de variétés abéliennes CM qui ont $\Z[\zeta_p]$ pour anneaux d'endomorphismes ainsi que les autres énoncés sur les variétés abéliennes CM et polarisations utiles à la suite du texte. L'obstruction à cette approche, pour $p=2,$ vient du fait que l'on ne dispose pas, en caractéristique nulle, d'une courbe supersingulière. On remarque que la courbe $E$ donnée en partie \ref{chap2cont} n'est pas CM.

La partie 3 est consacrée aux résultats d'arithmétique des corps de nombres nécessaires pour appliquer  les théorèmes de torsion. On commence pour cela de manière analogue à \cite{Sb2004} en établissant l'existence d'extensions locales pour des $p$-groupes prescrits. On résout ensuite des problèmes de Grunwald pour des produits en couronne de la forme $\Z/p^m\Z\wr \mathfrak{S}_n$. On traite à part la construction pour $p=2$ qui nécessite une attention particulière. 
Dans tous les cas, bien qu'on cherche à produire des groupes de monodromie finie qui sont des $p$-Sylow des groupes concernés, on travaille avec les groupes complets pour permettre la résolution du problème de Grunwald qui se pose (i.e. le passage d'extensions locales à une extension d'un corps de nombres). 

\subsection{Rappels sur les groupes de monodromie finie} \label{chap2rappel}

Soient $A$ une variété abélienne de dimension $g$ sur un corps de nombres $K$ et $v$ une place non archimédienne de $K$ de caractéristique résiduelle $p$. On note $I$ le groupe d'inertie d'une extension $\overline{v}$ de $v$ à $\overline{K}$. Soit $\ell\neq p$ un nombre premier. L'action du groupe de Galois absolu $\mathrm{Gal}(\overline{K}/K)$ sur le module de Tate $\operatorname{T}_{\ell} A$ de $A$ induit une représentation $\ell$-adique
$$\rho_{A,\ell} \colon \mathrm{Gal}(\overline{K}/K) \longrightarrow \mathrm{GL}_{2g}( \Q_{\ell}).$$ 

On note $G$ le groupe algébrique linéaire obtenu comme adhérence pour la topologie de Zariski de l'image de $\rho_{A,\ell}$ restreinte à $I$ dans $\mathrm{GL}_{2g}$. 
\begin{defin}
Le groupe de monodromie finie de $A$ en $v$ noté $\Phi_{A,v}$ est le groupe des composantes $G(\overline{K})/G^{\circ}(\overline{K})$.
\end{defin}
On note $I_{A,v}$ le noyau du morphisme naturel, surjectif $I\rightarrow G(\overline{K})/G^{\circ}(\overline{K})$. On a donc $\Phi_{A,v}=I/I_{A,v}$ par définition.

Il est démontré dans \cite{Sb1998} (théorème 4.2) que cette définition est équivalente à celle de Grothendieck et en particulier 
$$I_{A,v}=\{ \sigma\in I \mid \rho_{A,\ell}(\sigma) \text{ est unipotent}\}.$$
Dans le cas où $A$ a bonne réduction potentielle cette égalité devient
$$I_{A,v}=\Ker \rho_{A,\ell}.$$

 Le résultat fondamental sur les groupes de monodromie finie est le suivant.

\begin{theo}(Grothendieck)
La variété abélienne $A$ a réduction semi-stable en $v$ si et seulement si $\Phi_{A,v}=\{1\}$. 
\end{theo}

Il suit de la définition et du résultat fondamental les propriétés suivantes :

\medskip
\begin{itemize}
\item[(i)] Le sous-groupe $I_{A,v}$ de $I$ définit une extension galoisienne $(K_v^{\mathrm{nr}})_{A,s}$ de $K_v^{\mathrm{nr}}$ qui est la plus petite extension sur laquelle $A_{K_v^{\mathrm{nr}}}$ atteint réduction semi-stable. Autrement dit, si $L$ est une extension de $K_v^{\mathrm{nr}}$ telle que $A_L$ a réduction semi-stable alors $(K_v^{\mathrm{nr}})_{A,s} \subset L$. En particulier on a 
$$\Card \Phi_{A,v}= [(K_v^{\mathrm{nr}})_{A,s} : K_v^{\mathrm{nr}}].$$

\item[(ii)] Les groupes de monodromie finie sont invariants par isogénie et puissance. De plus, si $B$ est une variété abélienne sur $K$ on a l'inclusion
$$(K_v^{\mathrm{nr}})_{A\times B,s} \subset (K_v^{\mathrm{nr}})_{A,s}(K_v^{\mathrm{nr}})_{B,s}.$$

\item[(iii)] Si $L$ est une extension de $K$ et $w\mid v$ est une place de $L$ non ramifiée alors
$$\Phi_{A_L,w}= \Phi_{A,v}.$$
\end{itemize}

On finit par remarquer que suite au résultat fondamental et à la propriété (i) on peut toujours, par un résultat d'approximation faible (voir par exemple la proposition \ref{locglob} ci-dessous), construire une extension finie $L$ de $K$ ramifiée seulement en $v$ et des places de caractéristiques résiduelles arbitrairement grandes telle que $A_L$ a réduction semi-stable aux places de $L$ au-dessus de $v$.

\section{Existence de variétés CM principalement polarisées}

On rappelle au préalable la convention pour les variétés abéliennes CM que l'on utilise ici.
\begin{defin}\label{varabcm} Soit $A$ une variété abélienne de dimension $g$. On dit que $A$ est CM s'il existe une $\Q$-algèbre commutative semi-simple $F$ de dimension $2g$ et une injection
$$F\rightarrow \Q\otimes \operatorname{End} A.$$
On dit dans ce cas que $A$ a multiplication complexe par $F$. 

Dans le cas où $A_{\overline{K}}$ est CM on dit que $A$ est potentiellement CM.
\end{defin}

On commence par la proposition d'existence des variétés abéliennes CM qui nous intéressent dans la suite.
\begin{prop}\label{chap2existcm}
Soient $p$ un nombre premier impair, $\zeta_p$ une racine primitive $p$-ième de l'unité et $\ell>p$ un nombre premier. Il existe un corps de nombres $K$ et une variété abélienne $A$ sur $K$ de sorte que 
\begin{itemize}
\item[$(i)$] $\dim A= \frac{p-1}{2};$
\item[$(ii)$] $\End A\simeq \Z[\zeta_p];$
\item[$(iii)$] $A$ admet une polarisation principale.
\end{itemize}
De plus on peut choisir $K$ ramifié seulement au-dessus de $p$ et des places de caractéristiques résiduelles supérieures à $\ell$ avec $A$ de bonne réduction sur $K$. 
\end{prop}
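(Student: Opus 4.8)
The plan is to build $A$ directly from the cyclotomic field $\Q(\zeta_p)$ by choosing a suitable CM type, then to produce a principal polarization by a descent / adjustment argument. First I would consider the CM field $F = \Q(\zeta_p)$, which has degree $p-1$ over $\Q$, so $g := \frac{p-1}{2}$ is the right dimension, and $F$ is a CM field with maximal totally real subfield $F^{+} = \Q(\zeta_p)^{+}$. Fixing a CM type $\Psi$ of $F$ (a set of $g$ embeddings $F \hookrightarrow \C$, one from each conjugate pair), the quotient $\C^{g}/\Psi(\O_F)$ is a complex torus which is an abelian variety $A_0$ with $\O_F = \Z[\zeta_p]$ acting, hence with $\End A_0 \supseteq \Z[\zeta_p]$; since $\Z[\zeta_p]$ is already maximal of the right rank, $\End A_0 = \Z[\zeta_p]$. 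This gives (i) and (ii) over $\C$, and by the theory of CM abelian varieties (Shimura–Taniyama) $A_0$ and its endomorphisms are defined over a number field $K_0$, which one can moreover take to be, or to contain, a reflex-type field; this handles the algebraicity and the rationality of endomorphisms.

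Next, the principal polarization. A general CM abelian variety built from $(\O_F,\Psi)$ carries a polarization of some degree coming from a choice of a purely imaginary $\xi \in F$ with $\operatorname{Im}(\psi(\xi)) > 0$ for all $\psi \in \Psi$; the polarization is principal exactly when the associated Riemann form identifies $\O_F$ with its own dual with respect to the trace pairing, i.e. when the different $\mathfrak{d}_{F/\Q}$ is generated by $\xi$ (equivalently $\xi \O_F = \mathfrak{d}_{F/\Q}^{-1}$ up to the relevant normalization). For $F = \Q(\zeta_p)$ the different is $(\zeta_p - 1)^{p-2}$ times a unit, or more precisely is generated by $f'(\zeta_p)$ where $f$ is the $p$-th cyclotomic polynomial, and one checks that an explicit purely imaginary generator exists: for instance a suitable $\Z[\zeta_p]^{\times}$-multiple of $\zeta_p^{a}(1-\zeta_p)^{-(p-2)}$ or of $\frac{1}{p}(\zeta_p - \zeta_p^{-1})$-type elements. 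So the key step is to exhibit $\xi \in F$ with $\xi = -\bar\xi$, $\xi\,\mathfrak{d}_{F/\Q} = \O_F$ (as fractional ideals) and $\operatorname{Im}\psi(\xi)>0$ for all $\psi \in \Psi$; the last positivity condition can always be arranged by replacing $\Psi$ with the CM type for which $\xi$ works, since the embeddings come in conjugate pairs and flipping an embedding flips the sign. This yields a principally polarized $(A_0, \lambda)$ over some number field, giving (iii).

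Finally, the ramification and good-reduction refinement. The abelian variety $A_0$ and the polarization are defined over some number field $K_0$; by property (i)/(iii) of the recollections on finite monodromy (weak approximation, cf. the Proposition \ref{locglob} cited) and the Néron–Ogg–Shafarevich criterion, after a finite base change $K/K_0$ one obtains good reduction everywhere. The point is to control where $K$ ramifies: $A_0$ has good reduction at every prime of $K_0$ not dividing $p$ once $\O_F$ acts, because the $\ell$-adic representations attached to a CM abelian variety are ramified only at $p$ and at primes where the variety or its CM structure has bad reduction, and the CM structure involves only the prime $p$ and the (finitely many) primes already dividing the conductor; choosing $K$ to trivialize ramification at those finitely many auxiliary primes — all of which one may assume lie above rational primes $> \ell$ after an initial twist/translation by a generic point of the moduli space — gives $K$ ramified only above $p$ and above primes of residual characteristic $> \ell$, with $A := A_0 \times_{K_0} K$ of good reduction over $K$. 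The main obstacle is the explicit verification that a purely imaginary generator of $\mathfrak{d}_{F/\Q}^{-1}$ exists in $\Q(\zeta_p)$ compatibly with a CM type, i.e. the principal-polarization step; everything else is a standard application of CM theory and weak approximation.
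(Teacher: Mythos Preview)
Your overall strategy coincides with the paper's --- build $\C^{g}/\Psi(\Z[\zeta_p])$, polarize via a purely imaginary generator $\xi$ of the inverse different (the paper uses precisely $\alpha=(\zeta_p-\zeta_p^{-1})/p$, which you mention), and descend via Shimura's theorems --- but two steps are not correctly argued.

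First, the assertion $\End A_0=\Z[\zeta_p]$ ``since $\Z[\zeta_p]$ is already maximal of the right rank'' is unjustified: containing a commutative order of rank $2g$ does not prevent $\End A_0$ from being larger. If the CM type $\Psi$ is induced from a proper CM subfield (and such subfields exist, e.g.\ $\Q(\sqrt{-p})\subset\Q(\zeta_p)$ for $p\equiv 3\pmod 4$), then $A_0$ is isogenous to a power of a lower-dimensional CM variety and $\End A_0\otimes\Q$ is a matrix algebra strictly containing $\Q(\zeta_p)$. You must check that the $\Psi$ determined by your $\xi$ is \emph{primitive}; the paper does this explicitly for its $\alpha$, showing that no nontrivial element of $\mathrm{Gal}(\Q(\zeta_p)/\Q)$ preserves the resulting set of embeddings. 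Since you let the polarization choose $\Psi$ a posteriori, and different purely imaginary generators of $\mathfrak{d}^{-1}$ (related by totally real units) can yield different CM types, this verification cannot be skipped.

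Second, the ramification control is not right. The moduli of principally polarized CM abelian varieties of type $(\Z[\zeta_p],\Psi)$ is zero-dimensional, so there is no ``generic point of the moduli space'' to move to and no way to push auxiliary bad primes above~$\ell$ by such a device. The paper instead invokes Shimura's main theorem to identify the field of moduli of $(A,\lambda,\text{endomorphisms},A[p])$ as a ray class field over the reflex field $\Q(\zeta_p)$ of conductor supported at~$p$, hence ramified only above~$p$; over this field the $p$-torsion is rational, which forces semistable (hence good, since potentially good) reduction outside~$p$. Only then does weak approximation enter, to produce a further extension ramified only above~$p$ and above primes $>\ell$ on which good reduction at~$p$ is also achieved.
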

\begin{proof}
On commence par trouver une variété abélienne $A$ sur $\C$ qui vérifie $(i),$ $(ii)$ et $(iii)$. 

 On note $\alpha=\frac{\zeta_p-\zeta_p^{-1}}{p}\in \Q(\zeta_p)$. Pour tout plongement $\tau\colon \Q(\zeta_p)\rightarrow \C$ l'image $\tau(\alpha)$ de $\alpha$ est un imaginaire pur que l'on note ${{\mathrm{i}}}\beta_{\tau}$. Alors 
$$\Phi=\{\tau \colon \Q(\zeta_p)\rightarrow \C \mid \beta_{\tau} >0\}$$
 définit un type CM de $\Q(\zeta_p)$ qui est primitif. Il suffit de voir pour cela que 
$$G=\{\sigma\in \mathrm{Gal}(\Q(\zeta_p) /\Q) \mid \Phi \sigma= \Phi\}=\{\mathrm{id}\}$$
par la proposition 26 de \cite{shimura}.
Si l'on fait correspondre à chaque élément de ce groupe de Galois son image de $\zeta_p$, $\Phi$ correspond aux racines primitives $p$-èmes de l'unité sur le demi-cercle supérieur et la composition par un élément $\sigma$ à une rotation. Une telle rotation non triviale ne laisse pas stable le demi-cercle supérieur ce qui donne bien $G=\{\mathrm{id}\}$. 

Le type CM primitif $\Phi$ définit une structure complexe sur $\Q(\zeta_p)\otimes \R$. On va maintenant montrer que le tore complexe $A= \Q(\zeta_p)\otimes \R/ \Z[\zeta_p]$ a les propriétés voulues. La dimension de $A$ est bien $\frac{p-1}{2}$ ce qui donne $(i)$. Pour $(x,y)\in (\Q(\zeta_p)\otimes \R)^2$ on pose
$$H(x,y)=2 \sum\limits_{\tau\in  \Phi} \beta_{\tau} \tau(x) \overline{ \tau(y)}.$$
On vérifie suivant \cite{mum} p. 212 que $H$ est une forme de Riemann avec 
$$\Im H ( \Z[\zeta_p], \Z[\zeta_p])=\Z.$$
 Plus précisément on calcule la matrice de $\Im H$ dans la base $(1,\zeta_p,\dots, \zeta_p^{p-2})$. On a 
$$H(\zeta_p^m, \zeta_p^n)= \Tr_{\Q(\zeta_p)/\Q} (\alpha \zeta_p^{m-n})$$
ce qui vaut $0$ si $|m-n|\neq 1$, $1$ si $m=n+1$ et $-1$ sinon. Cela donne pour matrice de $\Im H$
$$\begin{pmatrix}
0 & 1 & & 0\\
-1 & 0 & \ddots & \\
 & \ddots & \ddots    & 1 \\
0& & -1 & 0
 \end{pmatrix} $$
 qui a donc déterminant $1$. On obtient que le tore complexe $A$ est une variété abélienne et $H$ définit une polarisation principale. Pour $(ii)$ on a l'inclusion $\Z[\zeta_p] \hookrightarrow \End A$ et l'égalité vient du fait que $A$ est simple car $\Phi$ est primitif.

 Le corps réflexe de $(\Q(\zeta_p), \Phi)$ est encore $\Q(\zeta_p)$ car $\Q(\zeta_p)$ est une extension abélienne et $\Phi$ est un type primitif. Par le théorème p. 112 de \cite{shimura} le corps des modules de $A$ muni de ses endomorphismes et de sa polarisation est une extension de $\Q(\zeta_p)$ incluse dans son corps de classes de Hilbert. De la même façon le théorème 2 p. 118 de \cite{shimura} permet de calculer le corps des modules $K$ de cette structure à laquelle on a rajouté la $p$-torsion de $A$ comme corps de classes sur le corps réflexe et de borner son conducteur. On peut alors vérifier que $K$ n'est ramifié qu'en l'unique idéal au-dessus de $p$ de $\Z[\zeta_p]$. Finalement, le théorème 21.1 de \cite{shimura} assure que $K$ est un corps de définition de $A$.
 
 La variété abélienne $A$ a sa $p$-torsion définie sur $K$ donc par la proposition 4.7 de l'exposé IX de \cite{sga} elle a réduction semi-stable en toutes les places non archimédiennes de $K$ qui ne sont pas au-dessus de $p$. Par ailleurs comme $A$ est CM, par le théorème 6. a) de \cite{serretate} elle a bonne réduction potentielle donc bonne réduction en ces places. Quitte à faire une extension finie $L/K$ ramifiée seulement en les places au-dessus de $p$ et des places de caractéristiques résiduelles supérieures à $\ell$ on a $A_L$ a bonne réduction sur $L$ (voir paragraphe \ref{chap2rappel}).
\end{proof}

\begin{rem}
En particulier la variété abélienne $A$ du lemme précédent est CM et a multiplication complexe par $\Q(\zeta_p)$. 
\end{rem}

Soit $(A,\lambda)$ une variété abélienne principalement polarisée sur un corps $K$. Soit $B$ une variété abélienne sur $K$ telle qu'il existe une extension finie galoisienne $L$ et un isomorphisme $\varphi\colon B_L\rightarrow A_L$. Alors, comme la construction de la variété duale est fonctorielle, on a un isomorphisme $\varphi^{\vee}\colon A_L^{\vee}\rightarrow B_L^{\vee}$.  Il suit que $B_L$ a une polarisation principale $\varphi^*\lambda$, i.e. un isomorphisme $B_L\simeq B_L^{\vee}$ donné par $(\varphi^{\vee}) \circ \lambda \circ \varphi.$ On dispose par ailleurs de l'involution de Rosati correspondant à $\lambda$ sur $\End A_L \otimes \Q$ définie par ${}^{\dag} \colon f\mapsto \lambda^{-1} \circ f^{\vee} \circ \lambda$.  On donne maintenant un critère pour que $\varphi^*\lambda$ provienne d'une polarisation principale de $B$. On note $c$ le cocycle qui représente la classe de $B$ dans $H^1(\mathrm{Gal}(L/K), \operatorname{Aut} A_L)$, c'est-à-dire
$$\begin{array}{cccc}
c\colon & \mathrm{Gal}(L/K) & \longrightarrow & \operatorname{Aut} A_L \\
& \sigma & \longmapsto &  \varphi\circ \sigma(\varphi)^{-1}.
\end{array}$$

\begin{lem}\label{chap2lempolarprincip}
La polarisation principale $\varphi^* \lambda$ provient de $B$ si et seulement pour tout $\sigma\in \mathrm{Gal}(L/K)$ l'égalité
$$c(\sigma)^{\dag} c(\sigma)=\mathrm{id}$$
est vérifiée. 
\end{lem}
\begin{proof}
On remarque d'abord que $\varphi^* \lambda$ provient de $B$ si et seulement si elle est fixée par l'action de Galois, c'est-à-dire si et seulement si
$$\sigma(\varphi^* \lambda)= \varphi^* \lambda$$
pour tout $\sigma \in \mathrm{Gal}(L/K)$. 

Or pour $\sigma\in \mathrm{Gal}(L/K),$ on a
\begin{align*}
\sigma(\varphi^* \lambda)&=\sigma( \varphi^{\vee} \circ \lambda \circ \varphi) \\
&= \sigma(\varphi^{\vee}) \circ \sigma(\lambda) \circ \sigma(\varphi).\\
\end{align*}
L'égalité $\sigma(\varphi^* \lambda)= \varphi^* \lambda$ est donc équivalente à
$$\sigma(\varphi^{\vee}) \circ \sigma(\lambda) \circ \sigma(\varphi)=\varphi^{\vee}\circ \lambda \circ \varphi$$
soit à
$$\lambda = c(\sigma)^{\vee} \circ \lambda \circ c(\sigma)$$
et finalement à
$$c(\sigma)^{\dag} c(\sigma)=\mathrm{id}.$$
\end{proof}

On termine cette partie par un lemme sur les variétés abéliennes isotypiques qui sera utile en partie 5.  $A_{\overline{K}}.$
\begin{lem} \label{chap2corpsCM}Soient $A$ une variété abélienne sur un corps de nombres $K$ telle que $A_{\overline{K}}$ est isotypique et CM . Soit $K_A$ le corps de définition des endomorphismes de $A_{\overline{K}}$. Alors la variété abélienne $A'=A_{K_A}$ est isotypique. Il existe une variété abélienne $B$ de dimension $d$ sur $K_A$ telle que $A'$ est isogène à $B^h$ pour un entier $h\geq 0$. De plus on a 
$$\Q\otimes \operatorname{End} B\simeq Z$$
où $Z$ est un corps CM de dimension $2d$ sur $\Q$ avec $2dh=2g$.
\end{lem}
\begin{proof}
Comme $A_{\overline{K}}$ est isotypique l'algèbre $\operatorname{End} A_{\overline{K}} \otimes \Q=\operatorname{End} A'\otimes \Q$ est simple ce qui montre que $A'$ est isotypique. Il existe donc une variété abélienne simple $B$ sur $K_A$ telle que $A$ est isogène à $B^h$ pour un certain $h\geq 0$. La proposition 1.3.2.1 et le théorème 1.3.4 de \cite{conrad} donnent le résultat.
\end{proof}
\section{Le problème de Grunwald pour certains produits en couronne}

Soient $G$ un groupe fini, $k$ un corps de nombres et $S$ un ensemble fini de places de $k$. Le problème de Grunwald consiste à trouver une extension galoisienne $L/k$ de groupe $G$ et de comportement local prescrit aux places de $S$. Pour plus de détails à ce sujet, on renvoie le lecteur à la partie 2 de \cite{checco}.

 Notre but est de montrer que certains problèmes de Grunwald pour les groupes $\Z/p^m\Z \wr \mathfrak{S}_n$, où $p,m$ et $n$ varient, sont résolubles quitte à grossir le corps de base $k$ dans un premier temps. Dans un second temps, on s'intéressera au cas, plus difficile, du groupe $(\Z/4\Z\wr\mathfrak{S}_n) \rtimes\Z/2\Z$.
 
 On commence pour cela par rappeler la notion de produit en couronne de groupes.

\begin{defin} Soient $G$ un groupe fini et $n$ un entier naturel. On note $G\wr \mathfrak{S}_n$ le produit semi-direct de $G^n$ et $\mathfrak{S}_n$ où le groupe symétrique $\mathfrak{S}_n$ agit par permutations sur $G^n$.
\end{defin}

Pour un groupe fini $G,$ on a par définition
$$\operatorname{Card} G\wr \mathfrak{S}_n= n! (\operatorname{Card} G)^n.$$

\begin{lem} \label{cardcour} Soient $p$ un nombre premier impair et $n$ un entier naturel. On a 
$$v_p (\Card \Z/p\Z \wr \mathfrak{S}_n)= n+\sum\limits_{k=1}^{\infty} \bigg \lfloor \frac{n}{p^k}\bigg \rfloor$$
et
$$v_2(\Card \Z/4\Z \wr \mathfrak{S}_n)=2n+\sum\limits_{k=1}^{\infty} \bigg \lfloor \frac{n}{2^k}\bigg \rfloor.$$
\end{lem} 
\begin{proof}
Cela découle directement de la définition et de la formule de Legendre
$$v_p (n!)=\sum\limits_{k=1}^{\infty} \bigg \lfloor \frac{n}{p^k}\bigg \rfloor$$
pour tout nombre premier $p$.
\end{proof}

\begin{rem}
On remarque que si, pour un entier naturel non nul $g$ fixé et un nombre premier impair $p$, on choisit $n=\lfloor \frac{2g}{p-1} \rfloor,$ alors 
$$v_p (\Card \Z/p\Z \wr \mathfrak{S}_n)=r(2g,p).$$
Pour le nombre premier $2,$ on a seulement
$$v_2(\Card \Z/4\Z \wr \mathfrak{S}_g)= r(2g,2)-g$$
 en prenant $n=g$.
 
Les groupes $\Z/p\Z$ et $\Z/4\Z$ correspondent aux groupes des racines de l'unité dans $\Z[\zeta_p]$ et $\Z[\mathrm{i}]$ respectivement. Si l'on disposait d'une courbe elliptique avec le groupe des quaternions comme groupe d'automorphismes alors le calcul serait ici
$$v_2(\Card Q_8 \wr \mathfrak{S}_g)= r(2g,2).$$

\end{rem}

\subsection{Les groupes $\Z/p^m\Z\wr \mathfrak{S}_n$}

Dans cette sous-partie, on traite le cas des groupes $\Z/p^m\Z \wr \mathfrak{S}_n$, où $p,n$ et $m$ varient. La résolution du problème de Grunwald pour ces groupes découlera des travaux de Saltman dans \cite{salt} après quelques propositions de préparation. On construit dans un premier temps, pour un corps de nombres $k$, une extension $K$ de $k$ sur laquelle on montrera que le problème est résoluble.

On commence par deux résultats\footnote{Le deuxième résultat et sa preuve nous ont été communiqués par Akio Tamagawa, ce qui permet, avec la maîtrise  de la ramification locale, une présentation plus élégante de nos résultats.}
 standards.

\begin{prop} \label{locglob} Soient $K$ un corps de nombres et $v_1,\dots, v_n$ des places ultramétriques de $K$. Soient $K_i'/K_{v_i}$ des extensions finies ayant même degré $d$. Alors il existe une extension $K'/K$ avec une unique place $w_i|v_i$ pour chaque $i$ telle que $K'_{w_i}\simeq K_i'$ et $[K':K]= d$. 
\end{prop}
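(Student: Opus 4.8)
The plan is to realize $K'$ in the form $K[X]/(f)$ for a single monic polynomial $f\in K[X]$ of degree $d$, obtained by combining weak approximation in $K$ with Krasner's lemma. The point is that a polynomial over $K$ that is $v_i$-adically very close to the local minimal polynomial of a generator of $K_i'$ will, at $v_i$, cut out exactly $K_i'$, and being locally irreducible it is globally irreducible.

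First, since $K$ has characteristic $0$, every extension $K_i'/K_{v_i}$ is separable, so by the primitive element theorem I can write $K_i'=K_{v_i}(\alpha_i)$ with $\alpha_i$ integral over the valuation ring of $K_{v_i}$; let $f_i\in K_{v_i}[X]$ be its minimal polynomial, monic of degree $d$. Applying weak approximation to $K$ at the finite set $\{v_1,\dots,v_n\}$, coefficient by coefficient on the $d$ non-leading coefficients, I produce a monic $f\in K[X]$ of degree $d$ whose coefficients are, simultaneously for all $i$, $v_i$-adically as close as desired to those of $f_i$.

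The key step is the application of Krasner's lemma at each $v_i$. Fixing a root $\alpha_i$ of $f_i$ in $\overline{K_{v_i}}$, Krasner's lemma ensures that once $f$ is chosen close enough to $f_i$ (the required closeness depending only on $\alpha_i$ and the differences of the roots of $f_i$), the polynomial $f$ is separable and has a root $\beta_i\in\overline{K_{v_i}}$ with $K_{v_i}(\alpha_i)\subseteq K_{v_i}(\beta_i)$. Comparing degrees gives $[K_{v_i}(\beta_i):K_{v_i}]\le\deg f=d=[K_i':K_{v_i}]$, hence $K_{v_i}(\beta_i)=K_i'$ and in particular $f$ is irreducible over $K_{v_i}$. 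A fortiori $f$ is irreducible over $K$, so $K':=K[X]/(f)$ is a field extension of $K$ with $[K':K]=d$.

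Finally, for each $i$ there is a natural isomorphism of $K_{v_i}$-algebras $K'\otimes_K K_{v_i}\cong K_{v_i}[X]/(f)$, and the right-hand side is the field $K_i'$ by the previous step. Since $K'\otimes_K K_{v_i}\cong\prod_{w\mid v_i}K'_w$, this product being a field forces a unique place $w_i\mid v_i$ of $K'$, with $K'_{w_i}\cong K_i'$, as required. The only delicate point is making Krasner's lemma applicable at all the $v_i$ at once, and this is exactly what the simultaneous weak approximation on the coefficients provides; the rest of the argument is formal.
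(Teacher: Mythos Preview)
Your proof is correct and follows exactly the approach the paper indicates (it does not give a full proof, only a sketch referring to \cite{rib}): primitive element theorem to produce local minimal polynomials of degree $d$, weak approximation to find a global monic $f$ simultaneously close to all of them, and continuity of roots/Krasner's lemma to conclude that $K'=K[X]/(f)$ has the required local behaviour.
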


Ce résultat est obtenu de façon identique à d'autres résultats classiques, voir par exemple le chapitre 6 de \cite{rib} et particulièrement le théorème 4. La démonstration se base sur la continuité des racines sur les corps locaux et le théorème de l'élément primitif qui fournit un polynôme irréductible à approcher pour chaque extension locale.

\begin{lem}\label{tam}
Soient $K$ un corps de nombres et $v$ une place finie de $K$ au-dessus de $p$. Le pro-$p$ quotient maximal $H$ du groupe de Galois absolu $G$ de $K_v^{\mathrm{nr}}$ est un pro-$p$ groupe libre de rang infini dénombrable.
\end{lem}
\begin{proof}
Par les théorèmes 9.3 et 9.7 de \cite{koch} le groupe $H$ est pro-$p$ libre. Par ailleurs, la suite spectrale d'Hochschild-Serre donne un isomorphisme
$$H^1(H,\F_p)\simeq H^1(G,\F_p)$$
avec $H^1(G,\F_p)=\varinjlim H^1(G_n,\F_p)$ où $G_n$ est le groupe de Galois absolu de l'unique extension non ramifiée de degré $n$ de $K_v$. Par les théorèmes 3.9.1 et 7.5.11 de \cite{neuk} on a de plus $\dim H^1(G_n,\F_p)\geq n$ ce qui donne que la dimension de $H^1(H,\F_p)$ est infinie dénombrable. Le théorème 3.9.1 de \cite{neuk} permet alors de conclure que $H$ est de rang infini dénombrable.
\end{proof}

\begin{lem}\label{extlocale} Soit $G$ un $p$-groupe fini. Alors pour tout corps de nombres $k$ et toute place $v$ de corps résiduel de caractéristique $p$ de $k,$ il existe une extension finie non ramifiée $K^{(v)}$ de $k_v$ telle que $G$ est le groupe de Galois d'une extension totalement ramifiée $L^{(v)}/K^{(v)}$. 
\end{lem}
\begin{proof}
Par le lemme \ref{tam}, on dispose d'une extension galoisienne de $k_v^{\mathrm{nr}}$ de groupe de Galois $H$ un pro-$p$ groupe libre de rang infini dénombrable. Soit $(s_n)_{n\in \N}$ une base de ce groupe. On définit un morphisme surjectif $H\rightarrow G$ en choisissant des images presque toutes égales à $1_G$ des $s_n$. Ceci détermine une extension galoisienne de $k_v^{\mathrm{nr}}$ de groupe $G$. Cette extension descend en une extension galoisienne $L^{(v)}/K^{(v)}$ de même groupe pour un choix convenable de $K^{(v)}\subset k_v^{\mathrm{nr}}$ extension finie de $k_v$. 
\end{proof}

Le résultat de préparation avant de résoudre notre problème de Grunwald peut alors s'énoncer comme suit.

\begin{lem}\label{extglobloc} Soient $G$ un groupe fini, $k$ un corps de nombres et $S$ un ensemble fini de places ultramétriques de $k$. Il existe une extension finie $K/k$ telle que pour toute place $v$ de $S$
\begin{itemize}
\item[(1)] il existe une unique place $w$ de $K$ au-dessus de $v$;
\item[(2)] l'extension $K_w/k_v$ est non ramifiée;
\item[(3)] il existe une extension galoisienne $L^{(v)}/K_w$ totalement ramifiée;
\item[(4)] le groupe $\mathrm{Gal}(L^{(v)}/K_w)$ est isomorphe à un $p$-Sylow de $G$ où $p$ est la caractéristique résiduelle de $v$.
\end{itemize}
\end{lem}
\begin{proof}
Par le lemme \ref{extlocale} on obtient des extensions finies non ramifiées $K^{(v)}/k_{v}$ pour les places $v$ de $S$ et des extensions $L^{(v)}/K^{(v)}$ totalement ramifiées galoisiennes de groupes un $p$-Sylow de $G$.  Quitte à faire des extensions non ramifiées des $K^{(v)}$, et donc linéairement disjointes des $L^{(v)}$, on peut supposer que les degrés $[K^{(v)}:k_{v}]$ sont égaux. La proposition \ref{locglob} appliquée à cette situation donne l'existence de $K$.
\end{proof}

\begin{prop} \label{grun} Soient $k$ un corps de nombres, $p$ un nombre premier et $m,n$ des entiers naturels non nuls. Si $p=2$ on suppose que $m=1$ ou $m= 2$. On note $G$ le groupe $\Z/p^m\Z\wr \mathfrak{S}_n$. Soit $S$ un ensemble fini de places ultramétriques de $k$ contenant une place $v$ au-dessus de $p$. Alors il existe une extension finie $K/k$ et une extension $L/K$ galoisienne de groupe $G$ telles que
\begin{itemize}
\item[(1)] au-dessus de chaque place de $S,$ il existe une unique place de $K$ et elle est non ramifiée;

\item[(2)] le groupe d'inertie de $\mathrm{Gal}(L/K)$ en l'unique place $w$ de $K$ au-dessus de $v$ est un $p$-Sylow de $G$.
\end{itemize}
\end{prop}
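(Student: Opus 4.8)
The strategy is to reduce the Grunwald problem for $G = \Z/p^m\Z \wr \mathfrak{S}_n$ to the realization results of Saltman in \cite{salt}, after first enlarging the base field so that the prescribed local behavior at $S$ becomes "free" to impose. First I would apply Lemma \ref{extglobloc} to the finite group $G$, the number field $k$ and the set $S$. This yields a finite extension $K_0/k$ such that above each place of $S$ there is a unique place $w$, the extension $(K_0)_w/k_v$ is unramified, and there exists a totally ramified Galois extension $L^{(v)}/(K_0)_w$ whose group is a $p$-Sylow of $G$ (where $p$ is the residue characteristic of $v$). For the distinguished place $v_0 \in S$ above $p$, a $p$-Sylow of $G = \Z/p^m\Z \wr \mathfrak{S}_n$ is obtained by taking the full base group $(\Z/p^m\Z)^n$ together with a $p$-Sylow of $\mathfrak{S}_n$; this is the local condition we will need to match at $w_0$, the unique place of $K_0$ over $v_0$.

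Next I would invoke Saltman's theorem that $G = \Z/p^m\Z \wr \mathfrak{S}_n$ is a "generically realizable" group over any field of characteristic $\neq p$ containing enough roots of unity — more precisely, that there is a description of $G$-extensions flexible enough to prescribe local completions at finitely many places. The key point in Saltman's approach is that wreath products $A \wr \mathfrak{S}_n$ with $A$ abelian behave well with respect to the construction of generic extensions: one builds the $G$-extension as an iterated construction, first an $\mathfrak{S}_n$-extension and then a twisted form of an $A$-extension over it, and generic extensions (when they exist, which Saltman guarantees for $\Z/p^m\Z$ over fields with $\mu_{p^m}$ when $p$ is odd, or $m \le 2$ when $p = 2$ — this is exactly the hypothesis on $m$) solve all Grunwald problems away from $p$, and with the extra care provided by Lemma \ref{extglobloc} also at places above $p$. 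Concretely: over $K_0$ I would prescribe, at each $w$ over a place of $S$, that the completion be the compositum of $(K_0)_w^{\mathrm{nr}}$-pieces with the totally ramified $L^{(v)}$ furnished above, arranged so that the decomposition group is a $p$-Sylow of $G$ sitting inside $G$ in the standard way, and trivial (unramified, even split) decomposition elsewhere in $S$ if desired; Saltman's generic extension then produces a global $G$-extension $L/K$ (after possibly a further finite unramified enlargement $K/K_0$, absorbed harmlessly since unramified extensions do not change inertia) with these completions. Condition (1) then holds because we imposed unramified (indeed the places of $S$ have a unique extension to $K_0$ and we do not destroy this), and condition (2) holds at $w_0$ because we arranged the totally ramified part at $w_0$ to have group a $p$-Sylow of $G$, which is then exactly the inertia group of $\mathrm{Gal}(L/K)$ at $w_0$.

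The main obstacle is the place above $p$: Saltman's generic-extension machinery over a field $F$ only directly handles $\Z/p^m\Z$ when $\mathrm{char}\, F \neq p$, so the construction is a priori in the "tame" or "prime-to-$p$" world and says nothing about completions at places of residue characteristic $p$ — yet condition (2) demands a large ($p$-Sylow) inertia group precisely at such a place. This is where Lemma \ref{extglobloc} is essential: it lets us pre-arrange, over the enlarged local field $(K_0)_{w_0}$, a totally ramified Galois extension with group a $p$-Sylow of $G$ by hand (using the freeness of the pro-$p$ quotient of the absolute Galois group of the maximal unramified extension, Lemma \ref{tam}), so that the only thing Saltman's theorem must then supply globally is a $G$-extension whose completion at $w_0$ is this prescribed local extension — a local condition that, being realized by a genuine local extension, is admissible input to the Grunwald problem Saltman solves. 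The remaining care is purely bookkeeping: checking that the $p$-Sylow of $G$ produced locally embeds in $G$ compatibly with how Saltman's construction records decomposition groups, and that enlarging $K_0$ to $K$ by unramified extensions to satisfy any degree-matching hypotheses (as in the proof of Lemma \ref{extglobloc}) preserves properties (1) and (2). For $p = 2$ the restriction $m \in \{1,2\}$ is used exactly to stay within the range where $\Z/2^m\Z$ admits a generic extension over the relevant fields (the obstruction being the well-known failure of generic $\Z/8\Z$-extensions over $\Q$), which is why the statement excludes larger $m$ at the prime $2$.
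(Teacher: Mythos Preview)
Your plan coincides with the paper's proof in outline --- apply Lemma \ref{extglobloc} to obtain $K/k$ together with the local extensions, then invoke Saltman's generic Galois extensions (Theorems 2.1, 3.3, 5.1 of \cite{salt}) for $\Z/p^m\Z$, $\mathfrak{S}_n$ and hence their wreath product, and finally use his approximation Theorem 5.9 to globalise --- but one hypothesis of Theorem 5.9 is absent from your plan and is not automatic. That theorem produces a Galois \emph{field} extension $L/K$ of group $G$ with the prescribed completions only under the assumption that for every prime $\ell$ dividing $\Card G$, at least one of the prescribed local subgroups $H_i$ contains an $\ell$-Sylow of $G$. If $S$ contains, say, only the distinguished place above $p$, this fails. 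The paper handles this by first enlarging $S$ (without loss of generality) so that it contains a place above every prime dividing $\Card G$; Lemma \ref{extglobloc} then furnishes, at each such place, a totally ramified local extension whose group is an $\ell$-Sylow of $G$, and the hypothesis of Theorem 5.9 is satisfied by construction. Your suggestion to impose ``trivial (unramified, even split) decomposition elsewhere in $S$'' is therefore exactly the wrong move: the auxiliary places must carry \emph{large} local extensions precisely to force the global $G$-algebra produced by Saltman to be a field with full Galois group $G$.

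One minor correction: Saltman's generic extension for $\Z/p^m\Z$ exists over $\Q$ itself for $p$ odd (and for $m\le 2$ when $p=2$), with no requirement that the base contain $\mu_{p^m}$; this is Theorem 2.1 of \cite{salt}, and it is why no further enlargement of $K$ is needed once Lemma \ref{extglobloc} has been applied.
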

\begin{proof}

On peut supposer sans perte de généralité que $S$ contient des places au-dessus de tous les diviseurs premiers de $\Card G$. Le lemme \ref{extglobloc} appliqué avec $k,G$ et $S$ fournit une extension $K/k$ et des extensions locales. Les résultats de Saltman dans \cite{salt} permettent de réaliser ces extensions comme complétés d'une extension $L/K$. En effet d'après les théorèmes 2.1 et 5.1 de \cite{salt}, les groupes $\Z /p^m\Z$ et $\mathfrak{S}_n$ admettent des extensions galoisiennes génériques sur $K$ (par choix de $m$ pour $p=2$) et donc le produit en couronne $G=\Z/p^m\Z\wr \mathfrak{S}_n$ aussi par le théorème 3.3 de \cite{salt}. Le théorème 5.9 de \cite{salt} s'applique donc avec des sous-groupes $H_i$ qui sont des sous-groupes de Sylow de $G$ et pour tout nombre premier $\ell$ au moins un $\ell$-Sylow fait partie des $H_i$ par choix de $S$.

\end{proof}

\subsection{Le groupe $(\Z/4\Z\wr\mathfrak{S}_n) \rtimes\Z/2\Z$} \label{chap2pargrun2}

On fixe pour cette partie un entier naturel non nul $n$. On voit le produit en couronne $G=\Z/4\Z\wr \mathfrak{S}_n$ comme un groupe de matrices (voir \cite{sheptod} et le lemme 2.3 de \cite{r}) plongé dans $\mathrm{GL}_n(\Q(\mathrm{i}))$ et $H=G\rtimes \Z/2\Z$ est obtenu par l'action de la conjugaison complexe sur $G$. On a le diagramme d'inclusions
$$\xymatrix{
H \ar@{}[r]|-*[@]{\subset} & \operatorname{Aut}_{\Q} \Q(\mathrm{i})^n \\
G \ar@{}[r]|-*[@]{\subset} \ar@{}[u]|-*[@]{\subset} & \mathrm{GL}_n(\Q(\mathrm{i})) \ar@{}[u]|-*[@]{\subset} . 
}$$
Cela permet de voir $H$ comme le produit direct d'ensembles $G\times \{\mathrm{id}, \gamma\}$ où $\gamma\in \operatorname{Aut}_{\Q} \Q(\mathrm{i})$ est la conjugaison complexe et on note un élément de $H$ par un couple $(g,\gamma)$ ou $(g, \mathrm{id})$ avec $g\in G$. Pour $g\in G$ on utilise la notation 
$$\overline{g}=\gamma\cdot g=(1,\gamma)(g,\mathrm{id})(1,\gamma).$$

 Notre but est ici de résoudre un problème de Grunwald pour $H$ sur un corps de nombres $K$ de manière à ce que $K(\mathrm{i})=L^G$ où $L/K$ est galoisienne de groupe $H$ et un $2$-Sylow de $H$ est un groupe d'inertie de $L$. L'action de la conjugaison complexe sur $G$ est d'ordre $2$ et donc il existe un $2$-Sylow stabilisé par celle-ci. On peut donc choisir $H_2$, un $2$-Sylow de $H$, de la forme $G_2\rtimes \Z/2\Z$ où $G_2$ est un $2$-Sylow de $G$. 

On commence par un lemme technique qui correspond au lemme \ref{extlocale} dans le cas particulier que l'on traite ici.

\begin{lem} \label{extloc2} Soit $M/\Q_2^{\mathrm{nr}}(\mathrm{i})$ une extension galoisienne finie totalement sauvagement ramifiée.  Il existe une extension finie $K_2/\Q_2$ non ramifiée et une extension galoisienne totalement ramifiée $L_2/K_2$ de groupe $H_2$ vérifiant $L_2^{G_2}=K_2(\mathrm{i})$ et $L_2 \Q_2^{\mathrm{nr}} \cap M =\Q_2^{\mathrm{nr}}(\mathrm{i}).$
\end{lem}
\begin{proof}
Par le lemme \ref{tam} on dispose d'une extension galoisienne $L/\Q_2^{\mathrm{nr}}$ de groupe de Galois $V$ un pro-$2$ groupe libre de rang infini dénombrable. L'extension $\Q_2^{\mathrm{nr}}(\mathrm{i})$ définit un sous-groupe $\widetilde{V}$ d'indice $2$ de $V$. Soit $(s_k)_{n\in \N}$ une base de $V$. Le groupe $V_{\mathrm{lib}}=<s_k \mid k\in \N>$ est un groupe libre inclus dans $V$ dont l'adhérence est $V$ et $\widetilde{V}\cap V_{\mathrm{lib}}=\widetilde{V}_{\mathrm{lib}}$ est un sous-groupe d'indice $2$ de $V_{\mathrm{lib}}$ dont l'adhérence est $\widetilde{V}$. Il existe donc $i\in \N$ tel que $s_i\notin \widetilde{V}_{\mathrm{lib}}.$ Quitte à renuméroter on peut supposer $i=0$. Pour $k\geq 1$, si $s_k\notin \widetilde{V}_{\mathrm{lib}}$ alors $s_k s_0 \in  \widetilde{V}_{\mathrm{lib}}$ et donc quitte à remplacer $s_k$ par $s_k s_0$ on peut supposer que $s_k \in \widetilde{V}_{\mathrm{lib}}$ pour $k\geq 1$. On vérifie alors que la famille $\{s_0^2,s_k, s_0s_ks_0^{-1}\ \mid k\geq 1\}$ est une base de $\widetilde{V}_{\mathrm{lib}}$, ce qui peut aussi se déduire du théorème de Nielsen-Schreier.  En raisonnant comme dans la preuve du théorème 5.4.4 de \cite{Wils} on obtient de plus que $\widetilde{V}$ est un pro-$2$ groupe libre sur la même base. 

On est maintenant en mesure de construire l'extension $L_2$. Comme $M$ est galoisienne elle est donnée par un morphisme surjectif $f\colon\widetilde{V}\rightarrow F$ dont les images de la base $\{s_0^2,s_k, s_0s_ks_0^{-1}\ \mid k\geq 1\}$ sont presque toutes égales à $1_F$. Soit $N$ un entier tel que pour tout $k\geq N$, $f(s_k)=f(s_0s_ks_0^{-1})=1_F$. On définit alors un morphisme surjectif $\psi \colon \widetilde{V}\rightarrow G_2$ en choisissant des images de la base $\{s_0^2,s_k, s_0s_ks_0^{-1}\ \mid k\geq 1\}$ de la façon suivante.  On choisit pour chaque élément de $g\in G_2\setminus \{1_{G_2}\}$ un antécédent $s_k$ avec $k>N$ et on impose $\psi(s_0^2)=\psi(s_m)=1_{G_2}$ pour les autres. On impose de plus $\psi(s_0s_ks_0^{-1})=\overline{\psi(s_k)}$ pour tout $k$.  On étend ce morphisme en un morphisme surjectif $\varphi\colon V\rightarrow H_2$ en choisissant des images de la base $\{s_k \mid k\in \N\}$ de façon compatible. Pour cela on pose $\varphi(s_0)=(1_G,\gamma)$ et $\varphi(s_k)=(\psi(s_k),\mathrm{id})$ si $k\geq 1$. Le morphisme $\varphi$ définit une extension $L_2^{\mathrm{nr}}/\Q_2^{\mathrm{nr}}$ galoisienne de groupe $H_2$ dont le corps fixé par $G_2$ est $\Q_2^{\mathrm{nr}}(\mathrm{i})$ par construction et qui vérifie $L_2^{\mathrm{nr}}\cap M=\Q_2^{\mathrm{nr}}(\mathrm{i})$ car $(f,\psi)\colon \widetilde{V}\rightarrow F\times G_2$ est surjectif.  

\`A nouveau on peut descendre cette extension en une extension galoisienne totalement ramifiée $L_2$ de $K_2$, une extension finie non ramifiée de $\Q_2,$ telle que $L_2^{G_2}=K_2(\mathrm{i})$ et $L_2\Q_2^{\mathrm{nr}}=L_2^{\mathrm{nr}}$.  
\end{proof}

Par la proposition \ref{locglob} on dispose alors pour tout choix de $M$ comme dans l'énoncé d'une extension $K/\Q$ non ramifiée en $2$ et avec une unique place $v$ au-dessus de $2$ telle qu'on ait une extension galoisienne $L_2/K_{v}$ totalement ramifiée de groupe $H_2$ et dont le corps fixé par $G_2$ est $K_{v}(\mathrm{i})$. Ceci sera utilisé pour le résultat principal de cette partie avec le lemme de géométrie algébrique suivant qui permet de remplacer les résultats de Saltman.
\medskip

\begin{lem}\label{chap2ratio} Soit $K$ un corps de nombres ne contenant pas $\mathrm{i}$. On fait agir le groupe $H$ par $K$-automorphismes sur $R=K(\mathrm{i})[(X_g)_{g\in G}]$ de manière à avoir $(h,\gamma)\cdot aX_g= \overline{a} X_{h\overline{g}}$ pour $h,g \in G$ et $a\in K(\mathrm{i})$.  
Alors il existe des polynômes algébriquement indépendants $(P_a)_{a\in \mA}$ de $R$ et un polynôme $P\in K[(P_a)_{a\in \mA}]\setminus\{0\}$ tels que
$$R[\frac{1}{P}]^H=K[(P_a)_{a\in \mA}, \frac{1}{P}].$$

De plus, on peut choisir $P$ de manière à ce que l'extension $R[\frac{1}{P}]/R[\frac{1}{P}]^H$ soit étale. 
\end{lem}

\begin{proof}
On considère la représentation régulière de $G$ sur $K(\mathrm{i})$ donnée par
$$\operatorname{Reg} = \bigoplus\limits_{g\in G} K(\mathrm{i}) X_g.$$ 
Le sous-espace $V$ engendré par la famille de vecteurs $Y_i=\sum\limits_{g\in G} \overline{g_{i,1}} X_g$, $1\leq i\leq n$ est une sous-représentation de $\operatorname{Reg}$ isomorphe à la représentation standard (donnée par $G\subset \mathrm{GL}_n(K(\mathrm{i}))$). En effet, pour $h\in G$ on a
\begin{align*}
h\cdot Y_i&= \sum\limits_{g\in G} \overline{g_{i,1}} X_{hg}\\
 &= \sum\limits_{g\in G} \overline{(h^{-1}g)_{i,1}} X_g.\\
 \end{align*}
 Or la matrice de $h^{-1}$ est la conjuguée de la transposée de $h$ d'où
\begin{align*}
h\cdot Y_i&= \sum\limits_{g\in G} \sum\limits_{j=1}^n h_{j,1}\overline{g_{j,1}} X_g\\
&= \sum\limits_{j=1}^n h_{j,1} Y_j.
 \end{align*}
 Soient $W$ un supplémentaire de $V$ dans $\operatorname{Reg}$ et $L=K(\mathrm{i})(Y_1,\dots , Y_n)$. Par le lemme 3.5 de \cite{cosan} on a
 $$W\otimes L= (W\otimes L)^G\otimes_{L^G} L.$$
 Soit $(\widetilde{F}_j)_{j\in J}$ une base de $(W\otimes L)^G$ sur $L^G$. Comme on a une injection
 $$(W\otimes L)^G \longrightarrow (\operatorname{Reg} \otimes L)^G$$
 on peut écrire $\widetilde{F}_j=\sum\limits_{g\in G} g(P_j)\otimes X_g$ avec $P_j\in L$. 
 
 Maintenant on a $K(\mathrm{i})((X_g)_{g\in G})=L(W\otimes L)=L((F_j)_{j\in J})$ avec $F_j$ l'image de $\widetilde{F}_j$ par $P\otimes X_g\mapsto PX_g$. Il suit l'égalité $K(\mathrm{i})((X_g)_{g\in G})^G =L^G(F_j)$. On peut par ailleurs vérifier que $L^G$ est engendré par les fonctions symétriques en les $Y_1^4,\dots, Y_n^4$.
  D'après le théorème 1(c) de \cite{bourbaki}  IV.6.1 la famille $\{Y_1^{4b_1}\cdots Y_n^{4b_n} \mid 0\leq b_i \leq n-i\}$ est une base de $K(\mathrm{i})(Y_i^4)$ sur $L^G$ donc la famille $\{Y_1^{a_1}\cdots Y_n^{a_n} \mid a\in \mA\}$ avec $\mA=\{ a\in \N^n \mid a_i\leq 4(n-i)+3\}$ est une base de $L$ sur $L^G$. Il suit que chaque $F_j$ est une combinaison linéaire sur $L^G$ des éléments de la famille
 $$\{P_a=\sum\limits_{g\in G} g(Y_1^{a_1}\cdots Y_n^{a_n}) X_g\}_{a\in \mA}.$$
 
 Ces éléments étant stables par $G$ on a des inclusions
 $$K(\mathrm{i})((X_g)_{g\in G})^G=L^G(F_j)\subset L^G(P_a)\subset K(\mathrm{i})((X_g)_{g\in G})^G$$
 d'où l'égalité $K(\mathrm{i})((X_g)_{g\in G})^G=L^G((P_a)_{a\in \mA})$.
 
 De plus, le calcul suivant montre que les $P_{(4k-1,0,\dots, 0)}$ pour $1\leq k\leq n$ engendrent $K(\mathrm{i})[Y_i]^G$ par les formules de Newton :
\begin{align*}
P_{(4k-1,0,\dots, 0)}&= \sum\limits_{g\in G} g\cdot(Y_1^{4k-1}) X_g \\
&= \sum\limits_{g\in G} (\sum\limits_{i=1}^n g_{i,1} Y_i)^{4k-1} X_g \\
&= \sum\limits_{g\in G} \sum\limits_{i=1}^n \overline{g_{i,1}} Y_i^{4k-1}X_g \\
&= \sum\limits_{i=1}^n Y_i^{4k-1} \sum\limits_{g\in G} \overline{g_{i,1}} X_g \\
&= \sum\limits_{i=1}^n Y_i^{4k}.
\end{align*}
Il suit $K(\mathrm{i})((X_g)_{g\in G})^G =K(\mathrm{i})((P_a)_{a\in \mA})$ et on en déduit l'indépendance algébrique de la famille $(P_a)_{a\in \mA}$ car $\operatorname{degtr} K(\mathrm{i})((X_g)_{g\in G})= \Card G= \Card \mA$. Finalement, il existe $P\in K(\mathrm{i})[(P_a)_{a\in \mA}]\setminus\{0\},$ que l'on peut choisir stable par conjugaison, tel que
$$R[\frac{1}{P}]^G=K(\mathrm{i})[(P_a)_{a\in \mA}, \frac{1}{P}]$$
et comme les $P_a$ sont stables par conjugaison on a même
$$R[\frac{1}{P}]^H=K[(P_a)_{a\in \mA}, \frac{1}{P}].$$

Quitte à rajouter des facteurs à $P$ de manière à ce que l'extension soit étale on a le résultat annoncé. 
\end{proof}

On peut finalement déduire des propositions précédentes le résultat de type Grunwald que l'on veut.

\begin{theo}\label{extgrun2} Soit $M/\Q_2^{\mathrm{nr}}(\mathrm{i})$ une extension galoisienne finie totalement sauvagement ramifiée. Il existe un corps de nombres $K$ non ramifié en $2$ et une extension galoisienne $L/K$ de groupe $H$ telle que
\begin{itemize}
\item[(1)] le groupe d'inertie en une place $w$ de $L$ au-dessus de $2$ est $H_2$;
\item[(2)] $L_{w}^{G_2}=K_v(\mathrm{i})$ où $v=w_{|K};$
\item[(3)] $L_{w} \Q_2^{\mathrm{nr}}\cap M=\Q_2^{\mathrm{nr}}(\mathrm{i}).$  
\end{itemize}
\end{theo}
\begin{proof}

Par le lemme \ref{extloc2} appliqué avec $M$ on dispose d'une extension locale $L_2/K_2$ telle que $L_2^{G_2}=K_2(\mathrm{i})$ et par la proposition \ref{locglob} d'un corps de nombres $K$ et d'une place $v$ de $K$ au-dessus de $2$ telle que $K_v=K_2$. 

Maintenant, le lemme \ref{chap2ratio} appliqué avec $K$ fournit un morphisme fini étale galoisien de groupe $H$
$$\pi\colon W\longrightarrow U$$
où $W=\Spec K(\mathrm{i})[(X_g)_{g\in G}, \frac{1}{P}]$ et $U=\Spec K[(P_a)_{a\in \mA}, \frac{1}{P}]$ avec les notations du lemme. 

On pose $\mathbf{L}_2=\operatorname{Ind}^H_{H_2}L_2$ la $K_2$-algèbre galoisienne de groupe $H$ produit de $[H:H_2]$ copies de $L_2$ et vérifiant
$$\mathbf{L}_2^G=L_2^{G_2}=K_2(\mathrm{i}).$$

Par le lemme 5.2 de \cite{salt} appliqué au polynôme $\widetilde{P}\in K[(X_h)_{h\in H}]$ déduit de $P$ par la substitution $X_g=X_{g,\mathrm{id}}+X_{g,\gamma}$ il existe une base normale $(\xi_h)_{h\in H}$ de $\mathbf{L}_2$ sur $K_2$ qui n'annule pas $\widetilde{P}$. La famille $(w_g)_{g\in G}$ définie par
$$w_g=\xi_{g,\mathrm{id}}+\xi_{g,\gamma}$$
est alors une base normale de $\mathbf{L}_2$ sur $K_2(\mathrm{i})$ qui vérifie de plus
$$\overline{w_g}=w_{\overline{g}}.$$
En effet on a, pour $g\in G$, $\overline{\xi_{g,\mathrm{id}}}=\xi_{\overline{g},\gamma}$, $\overline{\xi_{g,\gamma}}=\xi_{\overline{g},\mathrm{id}}$ par définition du produit semi-direct et du fait que les $(\xi_h)_{h\in H}$ forment une base normale de $\mathbf{L}_2$ sur $K_2$. Par construction on a de plus $P((w_g)_{g\in G})=\widetilde{P}((\xi_h)_{h\in H})\neq 0$. On définit donc une spécialisation qui respecte l'action de $H$ par $X_g\mapsto w_g$ de manière à ce que $P(w_g)\neq 0$. Cela induit un diagramme commutatif

$$\begin{tikzcd}
K(\mathrm{i})[(X_g)_{g\in G}, \frac{1}{P}] \ar[d] & K[(P_a)_{a\in \mA}, \frac{1}{P}] \ar[l] \ar[d, "\varphi_2"]\\
\mathbf{L}_2 &  K_2 \ar[l].
\end{tikzcd}$$ 
Le morphisme $\varphi_2$ détermine ainsi un $K_2$-point $z_2$ de $U$. La fibre de $z_2\in U(K_2)$ est $\Spec \mathbf{L}_2$ par construction.

 Il reste à voir qu'un point $K$-rationnel de $U$ assez proche de $z_2$ a une fibre connexe qui donne une extension de corps qui convient. Comme $U$ vérifie l'approximation faible on peut, par le théorème 1.3 de \cite{eke}, choisir un $K$-point $z\in U$ suffisamment proche de $z_2$ pour la topologie analytique sur $U(K_2)$ dont la fibre est connexe et assez proche de manière à avoir l'égalité $\pi^{-1}(\iota_2^* z)=\pi^{-1}(z_2)$ par le lemme 3.5.74 de \cite{poonen}, avec $\iota_2\colon \Spec K_2 \rightarrow \Spec K$. Soit $L$ le corps tel que $\pi^{-1}(z) =  \Spec L$. C'est une extension galoisienne de groupe $H$. Par ce qui précède on a un isomorphisme de $K_2$-algèbres galoisiennes
$$L\otimes K_2\simeq \mathbf{L}_2$$
ce qui assure le comportement local de $L$.  
\end{proof}

\section{Variétés abéliennes tordues}

On va maintenant utiliser les résultats des deux parties précédentes pour construire les variétés abéliennes de l'énoncé du théorème \ref{chap2main}. Notre méthode consiste à tordre une variété abélienne $A$, c'est-à-dire la remplacer par une variété abélienne $B$ sur $K$ telle que $B_{L}\simeq A_{L}$ pour une extension galoisienne finie $L/K$ (on dit que $B$ est une $L/K$-forme de $A$). On commence par étudier l'effet de ce procédé sur les groupes de monodromie finie. 

\subsection{Les résultats de torsion}

On généralise le théorème 4.3 de \cite{Sb2004} énoncé pour les corps locaux dans deux directions différentes. Tout d'abord on obtient une généralisation directe aux corps de nombres et dans un second temps on donne une version avec des hypothèses réduites.

 Dans la suite de cette partie, pour un corps de nombres $K$ et une place non archimédienne $v$ de $K,$ on fixe un choix de plongement $\overline{K}\hookrightarrow \overline{K_v}$ et donc une place $\overline{v}$ de $\overline{K}$ au-dessus de $v$. On note $I(\overline{v}/v)$ le groupe d'inertie de $\overline{K_v}/K_v$ vu comme sous-groupe du groupe de décomposition de $\mathrm{Gal}(\overline{K}/K)$ associé à $\overline{v}$.

\begin{theo} \label{twist} Soient $A$ une variété abélienne de dimension $g$ sur un corps de nombres $K$ ayant bonne réduction et $L/K$ une extension galoisienne finie. Pour toute place finie $v$ de $K,$ on note $I_{v}$ le sous-groupe d'inertie de la place $\overline{v}_{|L}$ de $L$ au-dessus de $v$. Soit
$$c\colon \mathrm{Gal}(L/K)\longrightarrow \mathrm{Aut} A_L$$
un morphisme injectif. Alors il existe une variété abélienne $B$ de dimension $g$ sur $K$ telle qu'on ait des isomorphismes $\Phi_{B,v}\simeq I_{v}$ pour toute place finie $v$ de $K$.
\end{theo}
\begin{proof}
On considère le morphisme
$$\tilde{c}\colon \mathrm{Gal}(\overline{K}/K) \rightarrow \mathrm{Aut}  A_{L}$$
obtenu en composant $c$ et la surjection $\mathrm{Gal}(\overline{K}/K)\rightarrow \mathrm{Gal}(L/K)$.  

Soient $B$ la variété abélienne obtenue en tordant $A$ par le cocycle $\tilde{c}$ et $\varphi$ l'isomorphisme $B_L \rightarrow A_L$ qui vérifie $\varphi \sigma(\varphi)^{-1} =\tilde{c}(\sigma)$ pour tout $\sigma\in \mathrm{Gal}(\overline{K}/K)$ (ce procédé est décrit page 131 de \cite{se2}). 

Soient $v$ une place finie de $K$ et $w= \overline{v}_{|L}$ la place de $L$ au-dessus de $v$ définie par $\overline{v}$.
On a $\Phi_{B,v}=I(\overline{v}/v)/I_{B,v}$ où $I_{B,v}=\{\sigma\in I(\overline{v}/v)~\mid~ \rho_{B,\ell}(\sigma)=1\}$ avec $\ell$ un nombre premier distinct de la caractéristique $p$ du corps résiduel de $v$. On obtient 
$$\rho_{B,\ell}(\sigma)= \sigma(\varphi^{-1}) \rho_{A,\ell} (\sigma) \varphi= \varphi^{-1} \tilde{c}(\sigma)\rho_{A,\ell}(\sigma) \varphi.$$
Il suit 
$$I_{B,v}= \{\sigma\in I(\overline{v}/v)~\mid~ \tilde{c}(\sigma) \rho_{A,\ell} (\sigma)=1\}.$$

Comme $A$ a bonne réduction on a $\rho_{A,\ell} (\sigma)=1$ ce qui donne $I_{B,v}=\Ker \tilde{c}_{|I(\overline{v}/v)}$. On montre maintenant l'égalité $\Ker \tilde{c}_{|I(\overline{v}/v)} = I(\overline{v}/w)$. Par définition de $\tilde{c}$ on a $\Ker \tilde{c}=\mathrm{Gal}(\overline{K}/L)$ car $\Ker c=\{1\}.$ Il suit $\Ker \tilde{c}_{|I(\overline{v}/v)}=\mathrm{Gal}(\overline{K}/L)\cap I(\overline{v}/v)=I(\overline{v}/w).$  On en déduit 
$$\Phi_{B,v}= I(\overline{v}/v)/I(\overline{v}/w)=I_{v}.$$ 
\end{proof}

L'énoncé précédent ne suffit pas pour construire des variétés abéliennes CM avec monodromie finie sauvage maximale en une place résiduelle de caractéristique $2$. Pour ce faire on s'affranchit de l'hypothèse de bonne réduction. Dans le cas d'une variété abélienne sur un corps local on utilise des notations analogues à celles introduites dans le paragraphe \ref{chap2rappel} sans la mention de la place en indice. 

\begin{theo} \label{twist2} Soient $A$ une variété abélienne sur un corps local $K$ à corps résiduel algébriquement clos et $L/K$ une extension galoisienne finie. On suppose que le corps $K_A$ de définition des endomorphismes de $A_{\overline{K}}$ est un sous-corps de $L$ et que $A$ a bonne réduction potentielle. On suppose de plus que $L\cap K_{A,s}=K_A$. Soit 
$$c\colon \mathrm{Gal}(L/K)\longrightarrow \mathrm{Aut} A_L$$
un cocycle tel que $c_{|\mathrm{Gal}(L/K_A)}$ est injectif. Alors la $L/K$-forme de $A$ associée au cocycle $c$ est une variété abélienne $B$ sur $K$ telle que
$$[L:K_A][K_{A,s}:K] \mid  [K_{B,s}:K]\Card \mC \mid [LK_{A,s}:K]\Card \mC$$
 où $\mC$ est le centralisateur de $\mathrm{Gal}(K_{A,s}/K_A)$ dans $\Phi_A=\mathrm{Gal}(K_{A,s}/K)$.
\end{theo}
\begin{proof}
On considère le morphisme
$$\tilde{c}\colon \mathrm{Gal}(\overline{K}/K) \rightarrow \mathrm{Aut}  A_{L}$$
obtenu en composant $c$ et la surjection $\mathrm{Gal}(\overline{K}/K)\rightarrow \mathrm{Gal}(L/K)$.  

Soient $B$ la variété abélienne obtenue en tordant $A$ par le cocycle $\tilde{c}$ et $\varphi$ l'isomorphisme $B_L \rightarrow A_L$ qui vérifie $\varphi \sigma(\varphi)^{-1} =\tilde{c}(\sigma)$ pour tout $\sigma\in \mathrm{Gal}(\overline{K}/K)$. 

On a $\Phi_B=\mathrm{Gal}(\overline{K}/K)/I_{B}$ où $I_{B}=\{\sigma\in \mathrm{Gal}(\overline{K}/K)\mid \rho_{B,\ell}(\sigma)=1\}$ avec $\ell$ un nombre premier distinct de la caractéristique $p$ du corps résiduel. On obtient par la formule de torsion, pour tout $\sigma\in \mathrm{Gal}(\overline{K}/K),$
$$\rho_{B,\ell}(\sigma)= \sigma(\varphi^{-1}) \rho_{A,\ell} (\sigma) \varphi= \varphi^{-1} \tilde{c}(\sigma)\rho_{A,\ell}(\sigma) \varphi.$$
Il suit 
$$I_{B}= \{\sigma\in \mathrm{Gal}(\overline{K}/K)\mid \tilde{c}(\sigma) \rho_{A,\ell} (\sigma)=1 \}.$$
Soit $\sigma\in I_{B,v}$. L'action de $\tilde{c}(\sigma)$ sur le module de Tate $\operatorname{T}_{\ell}A$ se fait par $K_A$-automorphismes. Il suit que $\tilde{c}(\sigma)$ vu comme automorphisme de $\operatorname{T}_{\ell}A$ commute aux éléments de $\mathrm{Gal}(K_{A,s}/K_A)\subset\operatorname{Aut} \operatorname{T}_{\ell}A $. La dernière inclusion vient de ce que $\rho_{A,\ell}$ induit une injection $\mathrm{Gal}(K_{A,s}/K)\subset  \operatorname{Aut} \operatorname{T}_{\ell}A $ par définition de $K_{A,s}$. Il suit que $\rho_{A,\ell}(\sigma)$ commute aux éléments de $\mathrm{Gal}(K_{A,s}/K_A)$ car $\rho_{A,\ell}(\sigma)=\tilde{c}(\sigma)^{-1}$ et est donc dans $\mC$.

Les hypothèses donnent le diagramme d'extensions suivant
$$\xymatrix{
 & LK_{A,s} & \\
 L \ar[ur] & & K_{A,s} \ar[ul] \\
 & K_A \ar[ur] \ar[ul] & \\
 & K \ar[u] & 
}$$

On considère maintenant la restriction $(\rho_{A,\ell})_{|I_{B}} \colon I_{B}\rightarrow \mathrm{Gal}(K_{A,s}/K)$. Par ce qui précède son image est dans $\mC$ et son noyau est par construction $I_{v} \cap \Ker \rho_{A,\ell}$. On montre que cette intersection est égale à $\mathrm{Gal}(\overline{K}/LK_{A,s})$. Soit donc $\sigma\in I_{B} \cap \Ker \rho_{A,\ell}$. Comme $\Ker \rho_{A,\ell}=\mathrm{Gal}(\overline{K}/K_{A,s})$ on a $\sigma_{|K_{A,s}}=\mathrm{id}$ et en particulier $\sigma_{|K_A}=\mathrm{id}$. La description de $I_{B}$ assure par ailleurs que $\tilde{c}(\sigma)=c(\sigma_{|L})=1$. Or $\sigma_{|L}\in \mathrm{Gal}(L/K_A)$ par ce qui précède et l'hypothèse sur $c$ donne $\sigma_{|L}=\mathrm{id}$. Il suit $\sigma_{|LK_{A,s}}=\mathrm{id}$ comme annoncé.

On a 
$$[K_{B,s}:K]= \Card \mathrm{Gal}(LK_{A,s}/K)/(I_{B}/\mathrm{Gal}(\overline{K}/LK_{A,s}))$$
et on déduit de l'étude précédente
$$\Card I_{B}/\mathrm{Gal}(\overline{K}/LK_{A,s})\mid \Card \mC.$$
Finalement on obtient
$$[L:K_A][K_{A,s}:K] \mid   [K_{B,s}:K]\Card \mC$$
comme annoncé. 
\end{proof}

\subsection{Les résultats d'existence}

Le théorème \ref{chap2main} est obtenu à partir des constructions suivantes. La première donne pour chaque premier impair l'existence de variétés abéliennes avec grosse monodromie finie sauvage.
\begin{prop} \label{chap2existpfix} Soient $K$ un corps de nombres et $g$ un entier non nul. On note $p$ un diviseur premier impair de $M(2g).$ On suppose qu'il existe une variété abélienne $A$ sur $K$ vérifiant
\begin{itemize}
\item[$(i)$] $\dim A= \frac{p-1}{2};$
\item[$(ii)$] $\End A\simeq \Z[\zeta_p];$
\item[$(iii)$] $A$ admet une polarisation principale;
\item[$(iv)$] $A$ a bonne réduction.
\end{itemize}
Soit $S$ un ensemble fini de places ultramétriques de $K$ contenant une place $v$ au-dessus de $p$. Alors il existe une extension finie $L$ de $K$ telle que,

\begin{itemize}
\item[(1)] au-dessus de chaque place de $S$ il existe une unique place de $L$ et elle est non ramifiée;
\item[(2)] si $w$ est la place de $L$ au-dessus de $v$ il existe une variété abélienne $B$ principalement polarisée de dimension $g$ sur $L$ vérifiant
$$\Card \Phi_{B,w}=p^{r(2g,p)}.$$
\end{itemize}
\end{prop}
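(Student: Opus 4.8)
The goal is to produce, from a CM abelian variety $A/K$ of dimension $\frac{p-1}{2}$ with $\End A \simeq \Z[\zeta_p]$, a principally polarized $B$ of dimension $g$ over some finite extension $L/K$ whose finite monodromy group at a place above $p$ has order exactly $p^{r(2g,p)}$. Let me think about the numerology first. We have $r(2g,p) = \sum_{i\ge 0}\lfloor \frac{2g}{p^i(p-1)}\rfloor$, and by the remark after Lemma~\ref{cardcour}, choosing $n = \lfloor \frac{2g}{p-1}\rfloor$ gives $v_p(\Card \Z/p\Z\wr \mathfrak{S}_n) = r(2g,p)$. So the natural candidate for the monodromy group is a $p$-Sylow of $\Z/p\Z \wr \mathfrak{S}_n$, and the natural candidate for $B$ is a twist of $A^n \times (\text{something of the right dimension})$: indeed $n\cdot \frac{p-1}{2} = \lfloor\frac{2g}{p-1}\rfloor\cdot\frac{p-1}{2} \le g$, so I would take $B$ to be a twist of $A^n \times C$ where $C$ is a principally polarized abelian variety of the complementary dimension $g - n\frac{p-1}{2}$ with good reduction and trivial monodromy everywhere relevant (e.g. base-changed from something with good reduction at the places of $S$; $C$ contributes nothing to the monodromy by property (ii) of the recollections, the inclusion $(K_v^{\mathrm{nr}})_{A\times B,s}\subset (K_v^{\mathrm{nr}})_{A,s}(K_v^{\mathrm{nr}})_{B,s}$).

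**Key steps.** First I would invoke Proposition~\ref{grun} with the group $G = \Z/p\Z \wr \mathfrak{S}_n$, the base field $K$, and the set $S$ (containing $v\mid p$): this yields a finite extension $L/K$ in which every place of $S$ has a unique, unramified place above it, together with a Galois extension $M/L$ with group $G$ whose inertia group at the place $w\mid v$ is a $p$-Sylow $G_p$ of $G$ — so $\Card I_w = p^{r(2g,p)}$. Second, I need a faithful action of $G$ on $(A^n\times C)_{M}$ by automorphisms, i.e. an injection $c\colon G = \Z/p\Z\wr\mathfrak{S}_n \hookrightarrow \operatorname{Aut}(A^n\times C)_M$. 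Here $\operatorname{Aut} A_M \supset \Z[\zeta_p]^\times \supset \mu_p = \langle \zeta_p\rangle$, so $\mu_p^n \rtimes \mathfrak{S}_n = \Z/p\Z\wr\mathfrak{S}_n$ acts on $A_M^n$ by letting $\mathfrak{S}_n$ permute the factors and $\mu_p^n$ act coordinatewise via multiplication by roots of unity; extend trivially on $C$. This is manifestly injective. Third, I would apply the number-field torsion theorem~\ref{twist} to $A' := A_L^n\times C_L$ (which has good reduction over $L$ after possibly enlarging — but Proposition~\ref{grun} already gives $S$-places unramified, and good reduction is inherited), the extension $M/L$, and the injective $c\colon \mathrm{Gal}(M/L)=G\to \operatorname{Aut} A'_M$: this produces a $B/L$ with $\Phi_{B,w}\simeq I_w$, hence $\Card\Phi_{B,w} = p^{r(2g,p)}$, and $\Phi_{B,v'} \simeq I_{v'}$ for all other places — trivial at the remaining places of $S$ by unramifiedness, giving (1). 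Fourth and last, I must arrange that $B$ is \emph{principally polarized} over $L$: $A'$ carries the product principal polarization $\lambda$, and by Lemma~\ref{chap2lempolarprincip} the twisted polarization $\varphi^*\lambda$ descends to $B$ iff $c(\sigma)^{\dag}c(\sigma) = \mathrm{id}$ for all $\sigma\in G$, where $\dag$ is the Rosati involution. So I need the $G$-action to be by automorphisms \emph{preserving} the polarization up to the Rosati-unitarity condition — for the $\mu_p$-factors, multiplication by $\zeta_p$ on $A$ has Rosati adjoint equal to multiplication by $\bar\zeta_p = \zeta_p^{-1}$ (complex conjugation on the CM field, which is exactly how Rosati acts on a CM abelian variety), so $\zeta_p^\dag \zeta_p = \mathrm{id}$; and the permutation automorphisms of $A^n$ are orthogonal for the product polarization, so also satisfy the identity. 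Hence every $c(\sigma)$ is Rosati-unitary and $B$ inherits a principal polarization.

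**Main obstacle.** The routine verifications (good reduction descending along $L/K$, the complementary factor $C$ of the right dimension existing with the needed properties, the bookkeeping $n\frac{p-1}{2}\le g$) are straightforward. The one genuinely delicate point is the compatibility between the \emph{group-theoretic} requirement of Proposition~\ref{grun} — that the realized group be the full wreath product $\Z/p\Z\wr\mathfrak{S}_n$ with a $p$-Sylow as inertia — and the \emph{geometric/polarization} requirement that this same group act faithfully on $A^n\times C$ by polarization-preserving (Rosati-unitary) automorphisms and that the resulting twist be principally polarized. One must check the two constructions use a consistent copy of $G$ and that the injectivity hypothesis of Theorem~\ref{twist} is met on the nose (it is, since $\mu_p^n\rtimes\mathfrak{S}_n\hookrightarrow\operatorname{Aut} A^n$ is visibly faithful). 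I expect the write-up to spend most of its effort spelling out this action, verifying Rosati-unitarity via the CM structure, and confirming via property (ii) of the recollections that adjoining the inert factor $C$ does not inflate the monodromy beyond the $p$-group coming from the wreath product.
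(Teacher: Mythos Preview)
Your proposal is correct and follows essentially the same approach as the paper: apply Proposition~\ref{grun} to produce $L$ and a Galois $M/L$ with group $G=\Z/p\Z\wr\mathfrak{S}_n$ ($n=\lfloor 2g/(p-1)\rfloor$) and $p$-Sylow inertia at $w$, embed $G$ into $\operatorname{Aut}A^n_M\subset\mathrm{GL}_n(\Z[\zeta_p])$ as the monomial unitary matrices, twist via Theorem~\ref{twist}, verify principal polarization by Lemma~\ref{chap2lempolarprincip}, and pad by a good-reduction factor to reach dimension $g$. The only cosmetic difference is that the paper twists $A^n$ first and then multiplies by the elliptic-curve factor $C^k$ (so that Theorem~\ref{twist} is applied to $A^n_L$, which has good reduction everywhere by hypothesis~(iv)), whereas you twist $A^n\times C$ with trivial action on $C$; these yield the same $B$, but the paper's ordering sidesteps any need to arrange everywhere-good reduction for $C$.
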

\begin{proof}

On note $G$ le groupe $\Z/p\Z \wr \mathfrak{S}_n$ avec $n=\lfloor\frac{2g}{p-1} \rfloor$. La proposition \ref{grun} assure l'existence d'une extension $L$ de $K$ vérifiant la propriété (1) et qui admet de plus une extension galoisienne $M/L$ de groupe $G$ dont le groupe d'inertie au-dessus de $w$ est un $p$-Sylow de $G$.

Par les hypothèses sur $A$ on a $\operatorname{Aut} A^n \simeq  \mathrm{GL}_n (\Z[\zeta_p])$. Le groupe $G$ s'identifie alors à un sous-groupe unitaire de $\operatorname{Aut} (A^n)$ en considérant les matrices ayant exactement un élément de $\{1,\zeta_p,\dots, \zeta_p^{p-1}\}$ par ligne et par colonne (voir la proposition 4 de \cite{gl} pour plus de détails). On obtient une injection 
 $$c\colon \mathrm{Gal}(M/L) \longrightarrow \operatorname{Aut} (A_{M}^n).$$
 Le théorème \ref{twist} donne alors l'existence d'une variété abélienne $B'$, $M/L$-forme de $A^n_L$ avec pour groupe de monodromie finie en $w$ un $p$-Sylow de $G$ qui d'après le lemme \ref{cardcour} est de cardinal $p^{r(2g,p)}$. Soit une telle variété abélienne $B'$. 
 On peut choisir une polarisation principale de $A^n$ par produit d'une polarisation principale sur $A$ qui donne la conjugaison complexe comme involution de Rosati sur $\End A$. La polarisation produit est principale et son involution de Rosati est la composition de la transposition et de la conjugaison complexe sur les coefficients des matrices. Comme le cocycle que l'on considère a pour image un groupe de matrices unitaires la condition du lemme \ref{chap2lempolarprincip} est vérifiée et $B'$ admet une polarisation principale.

  La dimension de $B'$ est $b=\dim A^n=\frac{p-1}{2}\cdot \lfloor \frac{2g}{p-1} \rfloor\leq g$. Soit $k=g-b$. On considère $B=B'\times C^k$ où $C$ est une courbe elliptique CM sur $L$ ayant bonne réduction en $v_1$. Alors $B$ est une variété abélienne de dimension $g$ qui admet une polarisation principale et avec $\Phi_{B,w}\simeq  \Phi_{B',w}$ par construction. Il suit que $L$ vérifie aussi (2).
\end{proof}

La situation pour $p=2$ est plus délicate. On va construire des formes tordues de puissances de la courbe elliptique $E\colon y^2=x^3-x$ sur $\Q$. L'application du théorème \ref{twist2} avec un corps $L$ bien choisi va permettre de gagner un facteur $2$ par rapport à la construction générale utilisée pour les premiers impairs. Un corps $L$ qui convient est bien sûr donné par le théorème \ref{extgrun2}.

\begin{theo} \label{chap2exist2} Soit $g$ un entier naturel non nul. Il existe une variété abélienne $A$ principalement polarisée de dimension $g$ sur un corps de nombres $K$ telle que $A_{\overline{K}}$ est CM et $\Card \Phi_{A,v}=2^{\alpha}$ avec $\alpha\geq r(2g,2)+1-g$ pour une place $v$ de $K$ de corps résiduel de caractéristique $2$.
\end{theo}
\begin{proof}
Soit $E$ la courbe elliptique donnée par l'équation $y^2=x^3-x$ sur $\Q$. Le corps de définition des endomorphismes de $E_{\overline{\Q}}$ est $\Q(\mathrm{i})$ et $E$ a bonne réduction potentielle en $2$ avec pour groupe de monodromie finie $\Phi_{E,2}=Q_8$ le groupe des quaternions d'ordre $8$, comme le montre le tableau p. 358 de \cite{kraus} (on a $\Delta=2^6$ et $c_4=2^4\cdot 3$). Sur $\Q_2^{\mathrm{nr}}$ on a une tour $\Q_2^{\mathrm{nr}} \subset \Q_2^{\mathrm{nr}}(\mathrm{i})\subset (\Q_2^{\mathrm{nr}})_{E,s}$ où $\mathrm{Gal}((\Q_2^{\mathrm{nr}})_{E,s}/\Q_2^{\mathrm{nr}}(\mathrm{i}))$ est isomorphe à $\Z/4\Z$ et est son propre centralisateur dans $\Phi_{E,2}$.
Soit $A=E^g$. Le théorème \ref{extgrun2} appliqué avec $M=(\Q_2^{\mathrm{nr}})_{A,s}$ fournit un corps de nombres $K$ non ramifié en $2$ et une extension galoisienne $L$ de $K$ de groupe $H$ telle que

\begin{itemize}
\item[(1)] $H_2$ est le groupe d'inertie d'une place $w$ au-dessus de $2;$
\item[(2)] $L_w^{G_2}=K_v(\mathrm{i})$ où $v=w_{|K}$;
\item[(3)] $L_w \Q_2^{\mathrm{nr}}\cap (\Q_2^{\mathrm{nr}})_{A,s}=L_w^{\mathrm{nr}}\cap(\Q_2^{\mathrm{nr}})_{A,s}=(\Q_2^{\mathrm{nr}})_{A}=\Q_2^{\mathrm{nr}}(\mathrm{i}).$
\end{itemize} 

 On note $\widetilde{G}$ le sous-groupe $\mathrm{Gal}(L/K(\mathrm{i}))$ de $\mathrm{Gal}(L/K)$. L'injection $L\hookrightarrow L_w$ donne une inclusion $G_2\subset \widetilde{G}$ et le fait que $(1,\gamma)\in H_2$ agit non trivialement sur $\mathrm{i}$. Comme $\widetilde{G}$ est d'indice $2$ il est distingué et la suite exacte
$$\begin{tikzcd}
1\arrow[r] & \widetilde{G} \arrow[r]& H \arrow[r] & \Z/2\Z \arrow[r] & 1
\end{tikzcd}$$
est scindée par le choix de l'élément $(1,\gamma)\in H$. On a donc une écriture de $H$ comme produit semi-direct de $\widetilde{G}$ et  $\{(1,\mathrm{id}),(1,\gamma)\}$ ce qui permet de définir un cocycle
$$c\colon \mathrm{Gal}(L/K) \longrightarrow \operatorname{Aut} A_L$$
comme dans la démonstration de la proposition 2.2 de \cite{r}. Par construction ce cocycle vérifie que la restriction $c_{|\widetilde{G}}$ est injective.
  
Soit $B$ la variété abélienne obtenue comme $L/K$-forme de $A$ associée au cocycle $c$. Alors $B_{\Q_2^{\mathrm{nr}}}$ est la $L_w^{\mathrm{nr}}/\Q_2^{\mathrm{nr}}$-forme de $A_{\Q_2^{\mathrm{nr}}}$ associée à ce même cocycle restreint à $H_2$. De plus, la restriction $c_{|G_2}$ est injective du fait que $c_{|\widetilde{G}}$ l'est et $G_2\subset \widetilde{G}$. Les hypothèses du théorème \ref{twist2} sont vérifiées et celui-ci montre que $B$ vérifie l'énoncé. En effet on obtient les divisibilités 
$$[L_w^{\mathrm{nr}}:\Q_2^{\mathrm{nr}}(\mathrm{i})][\Q_2^{\mathrm{nr}}(\mathrm{i}) : \Q_2^{\mathrm{nr}}][(\Q_2^{\mathrm{nr}})_{A,s} : \Q_2^{\mathrm{nr}}(\mathrm{i})]\mid [(\Q_2^{\mathrm{nr}})_{B,s}: \Q_2^{\mathrm{nr}}]\cdot \Card \mC$$
et
$$[(\Q_2^{\mathrm{nr}})_{B,s}: \Q_2^{\mathrm{nr}}] \mid [L_w^{\mathrm{nr}}(\Q_2^{\mathrm{nr}})_{A,s} :\Q_2^{\mathrm{nr}}]$$
où $\mC$ est le centralisateur de $\mathrm{Gal}((\Q_2^{\mathrm{nr}})_{A,s} /\Q_2^{\mathrm{nr}}(\mathrm{i}))$ dans $\mathrm{Gal}((\Q_2^{\mathrm{nr}})_{A,s} /\Q_2^{\mathrm{nr}}).$ Cela donne $\Card \Phi_{B,v}=[(\Q_2^{\mathrm{nr}})_{B,s}: \Q_2^{\mathrm{nr}}]=2^{\alpha}$ pour un entier $\alpha$. De plus on a par construction $[L_w^{\mathrm{nr}}:\Q_2^{\mathrm{nr}}(\mathrm{i})][\Q_2^{\mathrm{nr}}(\mathrm{i}) : \Q_2^{\mathrm{nr}}]=\Card H_2=2^{r(2g,2)-g+1}$ et $[(\Q_2^{\mathrm{nr}})_{A,s} : \Q_2^{\mathrm{nr}}(\mathrm{i})]=\Card \mC=4$. On en déduit l'inégalité
$$\alpha \geq r(2g,2)+1-g.$$

On montre que $B$ est principalement polarisée avec le lemme \ref{chap2lempolarprincip} comme dans la démonstration précédente.
\end{proof}

On est finalement en mesure de démontrer le théorème principal.
\begin{theo}\label{Chap2theoprincipaldem}
Soient $g$ un entier naturel non nul et $K$ un corps de nombres non ramifié en $2$. On note $p_1,\dots,p_n$ les diviseurs premiers impairs de $M(2g)$. Alors il existe une extension finie $L$ de $K$ telle que pour chaque $i\in \{1,\dots,n\}$ il existe une variété abélienne $A_i$ de dimension $g$ principalement polarisée sur $L$ et une place $v_i$ de $L$ avec
$$\Card \Phi_{A_i,v_i}= p_i^{r(2g,p_i)}$$
et il existe une variété abélienne principalement polarisée $A$ de dimension $g$ sur $L$ et une place $v$ de $L$ telles que 
$$\Card \Phi_{A,v}=2^{\alpha}$$
et $\alpha \geq r(2g,2)+1-g$.
\end{theo}
\begin{proof}
On rappelle (voir paragraphe \ref{chap2rappel}) que les groupes de monodromie finie sont invariants par extension non ramifiée. 

On construit maintenant le corps $L$ de l'énoncé par compositum. Pour un corps de nombres $L'$ on note $S_{L'}$ l'ensemble des places ultramétriques de $L'$ divisant $2$ ou l'un des $p_i$.

Par le théorème \ref{chap2exist2} il existe un corps $L_2$ avec une variété abélienne principalement polarisée $A$ de dimension $g$ sur $L_2$ et une place $v$ au-dessus de $2$ de $L_2$ telles que 
$$\Card \Phi_{A,v}=2^{\alpha}$$
où $\alpha\geq r(2g,2)+1-g$.

Par ailleurs la proposition \ref{chap2existcm} assure l'existence de corps $K_{p_i}$ pour tous les $p_i,$ ramifiés seulement en $p_i$ et des places de caractéristiques résiduelles $p> \max p_i,$ tels que toute extension $K'/K_{p_i}$ vérifie les hypothèses de la proposition \ref{chap2existpfix} pour $S_{K'}$. On considère alors l'extension $K'$ obtenue par compositum de $K$, $L_2$ et les $K_{p_i}$. Par construction $K'/L_2$ est non ramifiée au-dessus de $v$. La proposition \ref{chap2existpfix} alors appliquée avec $K'$ et chacun des $p_i$ donne des extensions $L_{p_i}/K'$ non ramifiées au-dessus de $S_{K'}$ et avec une place $v_i$ au-dessus de $p_i$ ainsi qu'une variété abélienne $A_i$ de dimension $g$ principalement polarisée sur $L_{p_i}$ vérifiant
$$\Card \Phi_{A_i,v_i}= p_i^{r(2g,p_i)}.$$

Le corps $L/K'$ obtenu par compositum des $L_p$ est non ramifié au-dessus de $S_{K'}$ par construction et donc convient.
\end{proof}

\begin{rem}
On aurait pu énoncer le théorème sur un corps de nombres $K$ seulement modérément ramifié au-dessus de $2$. De plus, si on ne demande que l'existence de variétés abéliennes avec monodromie finie sauvage maximale pour les premiers impairs le théorème vaut sur tout corps de nombres $K$.

\end{rem}

\section{Une majoration dans le cas CM}

 On considère dans cette partie un corps de nombres  $K$, une variété abélienne $A$ de dimension $g$ sur $K$ et une place finie $v$ de $K$. On note comme précédemment $K_A$ le corps de définition des endomorphismes de $A_{\overline{K}}$ et $A'$ la variété abélienne $A_{K_A}$. Le but est l'obtention du théorème \ref{chap2majcmgen} que l'on déduit du cas isotypique.
 
 On suppose donc dans un premier temps que $A_{\overline{K}}$ est isotypique et CM. En particulier, par le théorème 6.(a) de \cite{serretate}, la variété abélienne $A$ a bonne réduction potentielle et, par le lemme \ref{chap2corpsCM}, $A'$ est isogène à une puissance d'une variété abélienne $B$ simple et CM sur $K_A$. On note $Z$ le corps $\End B \otimes \Q$ qui n'est autre que le centre de $\End A'\otimes \Q$. On introduit pour cette partie les notations suivantes :
$$2d=[Z:\Q],~h=\frac{g}{d} \text{ et } n=v_2(2d).$$ 
Avec ces notations $A'$ est isogène à $B^h$ et la dimension de $\End A'\otimes \Q$ sur son centre $Z$ est $h^2$.
  On fixe de plus une place $w$ de $K_A$ au-dessus de $v$. Comme les groupes de monodromie finie sont invariants par isogénie et puissance on a $\Phi_{A',w}\simeq \Phi_{B,w}$.

\begin{prop}\label{cormajophi} On a la relation de divisibilité 
$$\Card \Phi_{A,v} \mid [K_A:K] \Card \mu_Z$$
où $\mu_Z$ est le groupe des racines de l'unité de $Z$. 
\end{prop}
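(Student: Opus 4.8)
The plan is to compare $\Phi_{A,v}$ with the finite monodromy group of $A'=A_{K_A}$ at a place above $v$, and then to invoke the Serre--Tate bound for the latter. First I would record that $K_A/K$ is a finite \emph{Galois} extension, being the fixed field of the normal subgroup of $\mathrm{Gal}(\overline K/K)$ acting trivially on $\End A_{\overline K}$. Fix a place $w\mid v$ of $K_A$ and an extension $\overline v\mid w$ of $\overline K$, and set $I=I(\overline v/v)$ and $J=I\cap\mathrm{Gal}(\overline K/K_A)=I(\overline v/w)$. Since $\mathrm{Gal}(\overline K/K_A)$ is normal in $\mathrm{Gal}(\overline K/K)$, the subgroup $J$ is normal in $I$ and restriction embeds $I/J$ into $\mathrm{Gal}(K_A/K)$; in particular $[I:J]$ divides $[K_A:K]$.

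The key step is that $\Phi_{A',w}$ is a subgroup of $\Phi_{A,v}$ of controlled index. Since $A$ (hence $A'$) has potential good reduction by Serre--Tate (\cite{serretate}), the image $\rho_{A,\ell}(I)$ is finite, so $I_{A,v}=\ker(\rho_{A,\ell}|_I)$ and $\Phi_{A,v}\simeq\rho_{A,\ell}(I)$; as $A'=A_{K_A}$, the representation $\rho_{A',\ell}$ is $\rho_{A,\ell}$ restricted to $\mathrm{Gal}(\overline K/K_A)$, so likewise $\Phi_{A',w}\simeq\rho_{A,\ell}(J)\subseteq\rho_{A,\ell}(I)$. The index $[\rho_{A,\ell}(I):\rho_{A,\ell}(J)]$ divides $[I:J]$ (the preimage in $I$ of $\rho_{A,\ell}(J)$ contains $J$), whence
$$\Card\Phi_{A,v}=\Card\Phi_{A',w}\cdot[\rho_{A,\ell}(I):\rho_{A,\ell}(J)]\ \big|\ [K_A:K]\cdot\Card\Phi_{A',w}.$$

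Finally I would bound $\Card\Phi_{A',w}$ using the CM hypothesis on $A'$. By Lemma \ref{chap2corpsCM}, $A'$ is $K_A$-isogenous to $B^h$ with $B$ simple and $\End B\otimes\Q\simeq Z$ a CM field of degree $2d=2\dim B$, so $B$ has complex multiplication by $Z$. By invariance of finite monodromy groups under isogeny and powers (property (ii) of \S\ref{chap2rappel}), $\Phi_{A',w}\simeq\Phi_{B,w}$, and the Serre--Tate result recalled after Theorem \ref{chap2majcmgen} identifies $\Phi_{B,w}$ with a subgroup of $\mu_Z$; hence $\Card\Phi_{A',w}$ divides $\Card\mu_Z$. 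Combined with the displayed divisibility this yields $\Card\Phi_{A,v}\mid[K_A:K]\,\Card\mu_Z$. The only point that needs care is extracting an honest divisibility, rather than a mere inequality, from the index $[I:J]$ — this is precisely what the Galois-ness of $K_A/K$ provides; the remainder is the cited Serre--Tate input together with elementary group theory.
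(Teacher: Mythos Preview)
Your proof is correct and follows essentially the same strategy as the paper's: compare $\Phi_{A,v}$ with $\Phi_{A',w}$ through the extension $K_A/K$, use that $K_A/K$ is Galois to make the discrepancy divide $[K_A:K]$, and then invoke Serre--Tate (théorème 6(b) of \cite{serretate}) together with $\Phi_{A',w}\simeq\Phi_{B,w}$ to bound $\Card\Phi_{A',w}$ by $\Card\mu_Z$. The only cosmetic difference is that the paper phrases the first step via the diagram of minimal semistable extensions $L_A\subset L_{A'}$ over $K_v\subset K_{A,w}$, obtaining $\Card\Phi_{A,v}\mid [K_{A,w}:K_v]\,\Card\Phi_{A',w}$, whereas you work directly with the images $\rho_{A,\ell}(I)\supseteq\rho_{A,\ell}(J)$ and the index bound $[\rho_{A,\ell}(I):\rho_{A,\ell}(J)]\mid[I:J]$; these are equivalent descriptions of the same divisibility.
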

\begin{proof}
On note $L_{A'}$ et $L_A$ les extensions de $K_{A,w}$ et $K_v$ respectivement telles que $\mathrm{Gal}(L_{A'}/K_{A,w})=\Phi_{A',w}$ et $\mathrm{Gal}(L_A/K_v)=\Phi_{A,v}$. 
On a le diagramme d'extensions locales suivant
$$\begin{tikzcd}
 & L_{A'} \\
 L_A \arrow[ur, dash]& K_{A,w} \arrow[u, dash, "\Phi_{A',w}"'] \\
 & K_v \arrow[ul, dash, "\Phi_{A,v}"] \arrow[u, dash]
\end{tikzcd}$$
Il suit la relation de divisibilité
$$\Card \Phi_{A,v} \mid [K_{A,w}:K_v] \Card \Phi_{A',w}.$$
Or par le théorème 6.(b) de \cite{serretate} $\Card \Phi_{A',w}=\Card \Phi_{B,w}$ divise $\Card\mu_Z$ et par ailleurs $[K_{A,w}:K_v]$ divise $[K_A:K]$ du fait que $K_A/K$ est une extension galoisienne.
\end{proof}

On est alors amené à majorer la $2$-partie de $[K_A:K]$ en fonction de $Z$ et $g$. Le théorème 1.2 de \cite{gk} donne une borne de divisibilité pour $[K_A:K]$ optimale mais indépendante de $Z$ ce qui n'est pas suffisant ici.

On introduit les notations
 $$F^{(p)}=F\cap \Q(\mu_{p^{\infty}}),~ m(F,p)=\inf \{m\geq 1 \mid F^{(p)}\subset \Q(\mu_{p^m}),~p^m \neq 2 \}$$
  et $t(F,p)=[\Q(\mu_{p^{m(F,p)}}):F^{(p)}]$ où $F$ est un corps de nombres, $\mu_{p^m}$ le groupe des racines $p^m$-èmes de l'unité et $\Q(\mu_{p^{\infty}})$ la réunion des $\Q(\mu_{p^m})$ pour $m\geq 1$.  La borne de Schur est alors donnée, pour un entier naturel non nul $s$ et un corps de nombres $F,$ par
$$S(s,F)=2^{s-\lfloor s/t(F,2)\rfloor} \prod\limits_{p ~\mathrm{premier}} p^{m(F,p)\lfloor s/t(F,p)\rfloor} \Big ( \Big\lfloor \frac{s}{t(F,p)} \Big\rfloor ! \Big)_p,$$
où $\lfloor x\rfloor$ est la partie entière de $x$ et $m_p=p^{v_p(m)}$. On a en particulier
$$v_2(S(s,\Q({{{\mathrm{i}}}})))=r(2s,2)-s.$$

 D'après la preuve de la proposition 3.6 de \cite{r} on a la divisibilité
 $$[K_A :K] \mid [Z:\Q]\Gamma_{Z}(h)$$
 en notant 
 $$\Gamma_{Z}(h)=\operatorname{ppcm} \{\Card G \mid G\subset \mathfrak{A}^{\times}/Z^{\times}, [\mathfrak{A}:Z]=h^2\}$$
 où $\mathfrak{A}$ parcourt les algèbres centrales simples sur $Z$ de dimension $h^2$ et $G$ les sous-groupes finis de $\mathfrak{A}^{\times}/Z^{\times}$. 
 Les théorèmes 4.1 et 4.2 du même article relient $\Gamma_Z(d)$ à la borne de Schur et donnent ici
\begin{equation} \label{chap2divrémond}
[K_A :K]\Card \mu_{Z} \mid [Z:\Q] S(h, Z).
\end{equation}

 On va majorer la valuation $2$-adique du produit $[Z:\Q] S(h, Z).$ Avec les notations simplifiées $m(Z,2)=m,$ $t(Z,2)=t$ on a, pour tout entier naturel non nul $s$,
 $$v_2(S(s,Z))=s+(m-1)\Big\lfloor\frac{s}{t}\Big\rfloor+\sum\limits_{i=1}^{\infty} \Big\lfloor \frac{s}{2^i t}\Big\rfloor.$$
On remarque que $v_2(S(s,Z))$ ne dépend que de $m$ et $t$, c'est-à-dire que $v_2(S(s,Z))=v_2(S(s,Z^{(2)}))$. 

De cette formule exacte on peut déduire le lemme suivant qui nous ramène à étudier le cas d'une valuation maximale lorsque le degré de l'extension $Z/\Q$ est fixé. 
\begin{lem}\label{chap2casmax} Soient $s,d$ des entiers naturels non nuls et $n=v_2(2d)$. Le maximum des $v_2(S(s, Z))$ lorsque $Z$ parcourt les corps de nombres de degré $2d$ est obtenu pour une extension de degré $\frac{2d}{2^n}$ de $Z^{(2)}=\Q(\mu_{2^{n+1}})$. De plus, ce maximum est atteint seulement si $Z^{(2)}=\Q(\mu_{2^{n+1}})$. 
\end{lem}
\begin{proof}

On sait par le lemme 4.4 de \cite{r} que $t=1$ ou $2$. Comme la formule est croissante avec $m$ pour $t$ fixé il suffit de comparer les valeurs pour $m$ maximal dans les deux cas. Pour $t=1$ la valeur maximale possible de $m$ est $n+1$ et elle est $n+2$ pour $t=2$. Il suffit donc de vérifier 
$$s+ns+\sum\limits_{i=1}^{\infty} \Big\lfloor \frac{s}{2^i}\Big\rfloor - s-(n+1)\Big\lfloor\frac{s}{2}\Big\rfloor-\sum\limits_{i=1}^{\infty} \Big\lfloor \frac{s}{2^{i+1}}\Big\rfloor > 0.$$
Or on a
$$\Big\lfloor\frac{s}{2}\Big\rfloor+ns > (n+1)\Big\lfloor\frac{s}{2}\Big\rfloor$$
pour $n\geq 1$. 
\end{proof}

Appliqué à notre situation ce lemme donne l'inégalité suivante
 \begin{align*}v_2([Z:\Q] S(h, Z))&\leq v_2\big( 2^n S( h, \Q(\mu_{2^{n+1}}))\big).\\
\end{align*}

Par ailleurs le calcul suivant permet de se ramener à $\Q({{\mathrm{i}}})$ :
 $$v_2(S(h, \Q(\mu_{2^{n+1}}))) = v_2(S(2^{n-1}h, \Q({\mathrm{i}})))-3\cdot 2^{n-1}h+(n+2)h.$$
En effet, on a
\begin{multline*}v_2(S(h, \Q(\mu_{2^{n+1}})))-v_2(S(2^{n-1}h, \Q({\mathrm{i}})))= h+nh+\sum\limits_{i\geq 1}\Big\lfloor \frac{h}{2^{i}}\Big\rfloor \\- 2^{n-1}h-2^{n-1}h-\sum\limits_{i\geq 1}\Big\lfloor \frac{2^{n-1}h}{2^{i}}\Big\rfloor
\end{multline*}
\begin{align*}
&=(n+1)h-2 \cdot 2^{n-1}h -h \sum\limits_{i=0}^{n-2} 2^i \\
&=(n+2)h-3\cdot 2^{n-1}h.
\end{align*}

On peut maintenant conclure dans le cas isotypique.

\begin{theo}\label{maj2cm} Soit $A$ une variété abélienne de dimension $g$ sur un corps de nombres $K$ telle que $A_{\overline{K}}$ est isotypique et CM. Alors on a la majoration
$$v_2(\Card \Phi_{A,v}) \leq r(2g,2)-g+1$$
pour toute place ultramétrique $v$ de $K$. De plus l'égalité ne peut intervenir que lorsque $A_{\overline{K}}$ est isogène à une puissance de la courbe elliptique $y^2=x^3-x.$
\end{theo}
\begin{proof} Par la proposition \ref{cormajophi} et la relation de divisibilité (\ref{chap2divrémond}) on a
\begin{align*}
v_2 (\Card \Phi_{A,v})&\leq v_2([Z:\Q] S(h, Z)). \\
\end{align*}
Le calcul qui suit le lemme \ref{chap2casmax} permet alors de majorer $v_2 (\Card \Phi_{A,v})$ par
$$ v_2(S(2^{n-1}h,\Q({{\mathrm{i}}})))-3\cdot 2^{n-1}h+(n+2)h+n.$$
Finalement avec $h\geq 1$ et la croissance stricte en $s$ de $v_2(S(s,\Q(\mathrm{i})))$ on obtient
$$v_2 (\Card \Phi_{A,v}) \leq v_2(S(g,\Q({{\mathrm{i}}})))-3\cdot 2^{n-1}+2n+2.$$

On raisonne maintenant suivant la valeur de $n$. Si $n\geq 2$ on a $-3\cdot 2^{n-1}+2n+2\leq 0$ et il vient

$$v_2 (\Card \Phi_{A,v}) \leq v_2(S(g,\Q({{\mathrm{i}}})))= r(2g,2)-g.$$

Dans le cas $n=1,$ on distingue alors deux situations. Tout d'abord si $Z^{(2)}\neq \Q({\mathrm{i}})$ l'inégalité du lemme \ref{chap2casmax} est stricte donc on obtient
$$v_2 (\Card \Phi_{A,v}) < v_2(S(h,\Q({{\mathrm{i}}})))+1$$
ce qui donne
$$v_2(\Card \Phi_{A,v})\leq r(2g,2)-g.$$

Il reste la situation où $n=1$ et $Z^{(2)}=\Q({{\mathrm{i}}})$. La majoration est alors
\begin{align*}
v_2 (\Card \Phi_{A,v}) &\leq v_2(S(h,\Q({{\mathrm{i}}})))+1.
\end{align*} 
Si $Z\neq Z^{(2)}$ la croissance stricte en $s$ de $v_2(S(s,\Q({{\mathrm{i}}})))$ donne
$$v_2 (\Card \Phi_{A,v}) \leq v_2(S(g,\Q({{\mathrm{i}}})))=r(2g,2)-g$$
et si $Z=Z^{(2)}$ alors $h=g$ et l'inégalité est bien
$$v_2 (\Card \Phi_{A,v}) \leq v_2(S(g,\Q({{\mathrm{i}}})))+1=r(2g,2)-g+1.$$
Dans ce cas $Z=\Q(\mathrm{i})$ impose que $B$ est une courbe elliptique avec multiplication complexe par $\Q(\mathrm{i})$. La théorie de la multiplication complexe donne alors que $B_{\overline{K}}$ est isogène à la courbe d'équation $y^2=x^3-x$. 
\end{proof}

On termine cette partie par le passage du cas isotypique au cas général qui nécessite un dernier lemme technique de majoration inspiré du lemme 4.6 de \cite{gk}.
\begin{lem} \label{chap2lemmaj}Soit $p$ un nombre premier.
Pour des entiers naturels non nuls $s$ et $g$ tels que $s$ divise $g,$ on a l'inégalité 
$$sr(\frac{2g}{s},p)+v_p(s!)\leq r(2g,p).$$
\end{lem}
\begin{proof}
 Soit $k$ le plus petit entier naturel tel que $\Big\lfloor \frac{2g}{sp^k (p-1)}\Big\rfloor=0$. On a
 $$r(2g,p)=\sum\limits_{i=0}^{\infty} \Big\lfloor \frac{2g}{p^i(p-1)}\Big\rfloor=\sum\limits_{i=0}^{k-1} \Big\lfloor \frac{2g}{p^i(p-1)}\Big\rfloor+\sum\limits_{i=k}^{\infty} \Big\lfloor \frac{2g}{p^i(p-1)}\Big\rfloor.$$
Alors pour $i<k,$ c'est-à-dire $\Big\lfloor \frac{2g}{sp^i (p-1)}\Big\rfloor \neq 0,$ on a
$$ s\cdot\Big\lfloor \frac{2g}{s p^i(p-1)}\Big\rfloor  \leq \Big\lfloor \frac{2g}{p^i(p-1)}\Big\rfloor$$
d'où
$$sr(\frac{2g}{s},p)+\sum\limits_{i=1}^{\infty} \Big\lfloor \frac{2g}{p^{k-1}p^i(p-1)}\Big\rfloor\leq r(2g,p).$$

Par ailleurs, par choix de $k,$ $\Big\lfloor \frac{2g}{sp^{k-1}(p-1)}\Big\rfloor \geq 1$ et il suit que, pour $i\geq 1$,
$$\Big\lfloor \frac{2g}{p^{k-1}p^i (p-1)}\Big\rfloor =\Big\lfloor \frac{2g}{sp^{k-1} (p-1)}\cdot \frac{s}{p^i}\Big\rfloor \geq \Big\lfloor \frac{2g}{sp^{k-1} (p-1)}\Big\rfloor\cdot \Big\lfloor \frac{s}{p^i}\Big\rfloor \geq \Big\lfloor \frac{s}{p^i}\Big\rfloor$$
ce qui donne
$$\sum\limits_{i=1}^{\infty} \Big\lfloor \frac{2g}{p^{k-1}p^i(p-1)}\Big\rfloor \geq v_p(s!).$$
\end{proof}

Une disjonction de cas permet finalement d'obtenir la majoration.
\begin{theo} \label{chap2majcmgen2}
Soit $A$ une variété de dimension $g$ sur un corps de nombres $K$ telle que $A_{\overline{K}}$ est CM. Alors on a 
$$v_2(\Card \Phi_{A,v}) \leq r(2g,2)+1-g$$
pour toute place ultramétrique $v$ de $K$. De plus, l'égalité ne peut intervenir que lorsque une composante isotypique de $A_{\overline{K}}$ est isogène à une puissance de la courbe elliptique $y^2=x^3-x$. 
\end{theo}
\begin{proof}
On raisonne par récurrence sur la dimension $g$ de $A$.

Le cas où $A_{\overline{K}}$ est isotypique est traité dans le théorème \ref{maj2cm}. On suppose donc que $A_{\overline{K}}$ n'est pas isotypique.  

Soient $C_1,\dots, C_n$ les composantes isotypiques de $A_{\overline{K}}$, avec $n\geq 2$ par hypothèse, qui sont permutées par l'action de $\mathrm{Gal}(\overline{K}/K)$.

 S'il existe deux parties $(C_i)_{i\in I},(C_j)_{j\in J}\subset \{C_1,\dots, C_n\}$ non vides et stables par cette action telles que $I\cup J=\{1,\dots, n\}$, alors il existe des sous-variétés abéliennes non nulles $C$ et $D$ de $A$ telles que $C_{\overline{K}}=\sum_{i\in I} C_i$, $D_{\overline{K}}=\sum_{j\in J} C_j$ et $A=C+D$. En particulier on a $\Hom_{\overline{K}} (C_{\overline{K}}, D_{\overline{K}})=0$ ce qui assure que seule l'une des variétés $C_{\overline{K}}$ et $D_{\overline{K}}$ peut avoir une composante isotypique isogène à une puissance de la courbe $E\colon y^2=x^3-x$.  Soient $L_A,L_C$ et $L_D$ les plus petites extensions galoisiennes de $K_v^{\mathrm{nr}}$ sur lesquelles $A,C$ et $D$ atteignent réduction semi-stable. Alors on a l'inclusion $L_A\subset L_CL_D$ et comme le groupe de Galois de $L_CL_D$ sur $K_v^{\mathrm{nr}}$ est un sous-groupe du produit $\Phi_{C,v}\times \Phi_{D,v}$ on a l'inégalité
$$v_2(\Card \Phi_{A,v})\leq v_2(\Card \Phi_{C,v})+v_2(\Card \Phi_{D,v}).$$
On note $g_1=\dim C$ et $g_2=\dim D$. Par l'hypothèse de récurrence on obtient
\begin{align*}
v_2(\Card \Phi_{C,v})+v_2(\Card \Phi_{D,v})& \leq r(2g_1,2)-g_1+r(2g_2,2)-g_2+1 \\
&\leq r(2g,2)-g+1
\end{align*}
ce qui conclut dans ce cas. De plus, l'une des variétés $C_{\overline{K}}$ ou $D_{\overline{K}}$ a une composante isotypique isogène à une puissance de la courbe $E\colon y^2=x^3-x$ si et seulement si c'est le cas pour $A_{\overline{K}}$. Il suit que si $A_{\overline{K}}$ n'a pas de telle composante isotypique l'inégalité est stricte.

Sinon l'action est transitive. Les variétés abéliennes $C_i$ sont conjuguées sous l'action du groupe de Galois et ont donc même dimension $g/n$. Dans ce cas les algèbres $\End C_i\otimes \Q$ sont toutes isomorphes. La théorie de la multiplication complexe assure alors que si l'une des $C_i$ est isogène à une puissance de la courbe $E\colon y^2=x^3-x$ les autres le sont aussi, ce qui est absurde. 

Soit $L/K$ donné par le noyau de l'action de $\mathrm{Gal}(\overline{K}/K)$ sur les $C_i$. On a $[L:K]| n!$ et il existe des sous-variétés $D_i$ de $A_L$ telles que $(D_i)_{\overline{K}}=C_i$ et $A_L$ est isogène au produit des $D_i$. Par ailleurs pour une place $w|v$ de $L$ on a l'inégalité
$$v_2(\Card \Phi_{A,v})\leq v_2(\Card \Phi_{A_L,w}) +v_2([L:K]).$$
Or, par le théorème \ref{maj2cm} on a
$$v_2(\Card \Phi_{D_i,w})\leq r(\frac{2g}{n},2)-\frac{g}{n}$$
pour tout $i\geq 1$.
Il suit par le lemme \ref{chap2lemmaj} et l'hypothèse de récurrence
\begin{align*}
v_2(\Card \Phi_{A_L,w})&\leq \sum\limits_{i=1}^n v_2(\Card \Phi_{D_i,w}) \\
&\leq \sum\limits_{i=1}^n (r(\frac{2g}{n},2)-\frac{g}{n}) \\
&\leq r(2g,2)-g-v_2(n!).
\end{align*}
Finalement on a obtenu
\begin{align*}
v_2(\Card \Phi_{A,v})&\leq r(2g,2)-g-v_2(n!)+v_2([L:K]) \\
& \leq r(2g,2)-g.
\end{align*}

\end{proof}

\end{document}